\newtheorem{theorem}{Theorem}[section]
\newtheorem{lemma}[theorem]{Lemma}
\newcommand{\parag}[1]{\vspace{2mm}

\noindent{\bf #1} }
\newcommand{\RR}{\mathbb R}
\newcommand{\pg}{{\mathsf{pg}}}
\newcommand{\pgset}{{\cal G}}
\newcommand{\pts}{\mathcal{P}}
\def\hv{{\hat{v}}}
\newcommand{\eps}{\varepsilon}
\newcommand{\pot}{{\mathsf{pt}}}
\title{Expected degrees in random plane graphs\thanks{Research work on this project was done as part of the 2024 NYC Discrete Math REU, funded by NSF award DMS-2349366, and by PSC/CUNY award 67511-00 55.	}}
\author{Neely Lovvorn\thanks{ University of North Alabama, AL, USA.
{\sl nlovvorn@una.edu}.}
\and
Oscar Murillo-Espinoza\thanks{California State University, Monterey Bay, CA, USA.
{\sl omurilloespinoza@csumb.edu}.}
\and 
Adam Sheffer\thanks{CUNY: Baruch College, NY, USA.
{\sl adamsh@gmail.com}.}
}
\date{}
\begin{document}
\maketitle
\begin{abstract} 
We prove that, for every set of $n$ points $\pts$ in $\RR^2$, a random plane graph drawn on $\pts$ is expected to contain less than $n/10.18$ isolated vertices. 
In the other direction, we construct a point set where the expected number of isolated vertices in a random plane graph is about $n/23.32$.
For $i\ge 1$, we prove that the expected number of vertices of degree $i$ is always less than $n/\sqrt{\pi i}$

Our analysis is based on cross-graph charging schemes. 
That is, we move charge between vertices from different plane graphs of the same point set. This leads to information about the expected behavior of a random plane graph.
\end{abstract}

\section{Introduction}

Random graph theory is an active and highly successful research field, with a deep theory and a wide variety of applications (for example, see \cite{Bollobas98,FK15}).
Unfortunately, the main techniques of this field fail when considering certain types of geometric graphs. 
In this work, we consider random \emph{plane graphs}.
While plane graphs are useful for modeling many problems, hardly anything is known about random plane graphs.

Let $\pts$ be a finite point set in $\RR^2$.
In a \emph{plane graph} of $\pts$, the vertices are the points of $\pts$ and the edges are non-intersecting line segments.
Unlike planar graphs, we may not change the position of the vertices --- these are fixed at the coordinates of the points of $\pts$.
See Figure \ref{fi:PlaneGraph}.
Plane graphs have a wide variety of applications (for many of those, see \cite{TORG17}).

\begin{figure}[ht]
\centerline{\includegraphics[width=0.4\textwidth]{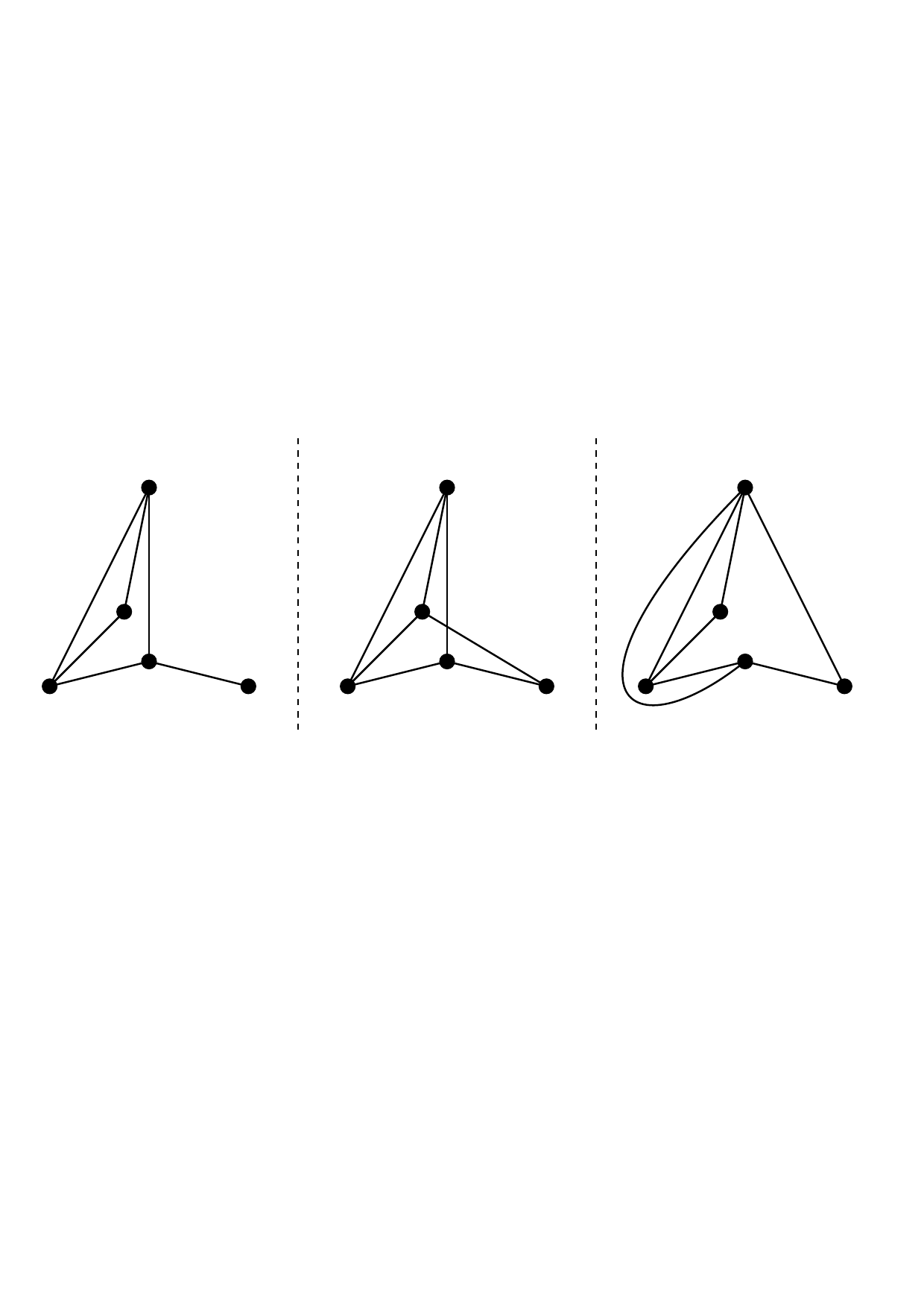}}
\caption{Three drawings of graphs on the same point set. All three are planar, but only the one on the left is a plane graph.}
\label{fi:PlaneGraph}
\end{figure}

We cannot use standard random graph models where each edge is chosen with a fixed probability.
If we do that, the resulting graph will contain edge crossings with a positive probability.
Instead, a random plane graph of a finite set $\pts$ is uniformly chosen from the set of all plane graphs that can be drawn on $\pts$.

For example, let $\pts$ be the set of three non-collinear points.
There are three potential edges and no two intersect, so $2^3=8$ plane graphs can be drawn on $\pts$.
We only consider the case where the vertices are labeled, so isomorphic graphs are distinct plane graphs of $\pts$.

Currently, the study of random plane graphs is focused on the expected degrees in a random graph. 
In a standard graph, this is a simple expectation calculation.
For plane graphs, this is a difficult problem with interesting applications.
For example, Lemma \ref{le:NumberOfPG} below shows a connection between expected degrees and the minimum number of plane graphs that can be drawn on any $n$ points. 

When studying properties of plane graphs, it is common to assume that no three points are collinear. 
This avoids a degenerate behavior that is reasonable to ignore.
Throughout this paper, we assume that no three points are collinear in all point sets.

For a finite set $\pts$, let $\hv_i(\pts)$ be the expected number of degree $i$ vertices in a random graph of $\pts$.
For a more rigorous definition and a discussion, see Section \ref{sec:theorems}.
The following theorem summarizes the current best results for expected degrees in random plane graphs (see \cite{SS13}).

\begin{theorem} \label{th:Previous}
Every set $\pts$ of $n$ points satisfies
\[ \hv_0(\pts)> \frac{n}{3207}, \qquad \hv_1(\pts)\ge \frac{3n}{1024}, \qquad \hv_2(\pts)\ge \frac{33n}{2048}. \]
We also have that
\[ \hv_2(\pts)+\hv_3(\pts)\ge \frac{n}{24}. \]
\end{theorem}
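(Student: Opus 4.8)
The plan is to control the degree of a single vertex by a deletion argument and then sum over all vertices. Fix $v\in\pts$ and put $\pts_v=\pts\setminus\{v\}$. Deleting $v$ together with its incident edges turns a plane graph $G$ on $\pts$ into a plane graph $G\setminus v$ on $\pts_v$; conversely, given a plane graph $G'$ on $\pts_v$, let $E_v(G')\subseteq\pts_v$ be the set of points $u$ for which the segment $vu$ crosses no edge of $G'$. Since no three points are collinear, two segments $vu$ and $vu'$ meet only at $v$, so \emph{every} subset $S\subseteq E_v(G')$ is admissible as the set of edges at $v$, and this recovers exactly the plane graphs $G$ on $\pts$ with $G\setminus v=G'$; moreover $\deg_G(v)=|S|$. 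Writing $N$ for the number of plane graphs on $\pts$ and $k_v(G')=|E_v(G')|$, we get $N=\sum_{G'}2^{k_v(G')}$ and, for each $i\ge0$, the number of plane graphs $G$ on $\pts$ with $\deg_G(v)=i$ equals $\sum_{G'}\binom{k_v(G')}{i}$, both sums over plane graphs $G'$ on $\pts_v$. Summing over $v$ yields
\[
 \hv_2(\pts)+\hv_3(\pts)=\frac1N\sum_{v\in\pts}\ \sum_{G'}\Bigl(\tbinom{k_v(G')}{2}+\tbinom{k_v(G')}{3}\Bigr).
\]

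Equivalently, for each $v$ and a uniformly random plane graph $G$ on $\pts$ we have $\Pr[\deg_G(v)\in\{2,3\}]=\mathbb{E}\bigl[g(K_v)\bigr]$, where $g(k)=\bigl(\binom k2+\binom k3\bigr)2^{-k}$ and $K_v$ is the number of points of $\pts_v$ visible from $v$ in $G\setminus v$; note that the law of $K_v$ is the $2^{k_v(G')}$-reweighting of the uniform law on plane graphs of $\pts_v$, so it is skewed towards large values of $k_v$. The function $g$ vanishes for $k\le1$, equals $\tfrac14$ at $k=2$, grows to $g(4)=g(5)=\tfrac58$, and then decays to $0$; in particular $g(k)\ge\tfrac14$ for $2\le k\le8$. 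Thus the entire statement reduces to showing that $K_v$ is, on average over $v$, neither almost always $\le1$ nor typically large: it would suffice to establish a geometric-type tail bound $\Pr[K_v\ge k]\le C\beta^{-k}$ with absolute constants $C$ and $\beta>2$, together with $\Pr[K_v\le1]$ bounded away from $1$, after which one plugs into the displayed identity and optimizes to extract the clean constant $\tfrac1{24}$ (the slack is large, since even a crude tail already pushes $\mathbb{E}[g(K_v)]$ well above $\tfrac1{24}$).

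The hard part is precisely this control of $K_v$, i.e.\ of the number of vertices a given vertex sees. Two things make it delicate: the relevant law is the $2^{k_v}$-reweighting, which favours exactly the plane graphs on $\pts_v$ that leave many points visible from $v$; and, via the displayed identity, ``$K_v$ has a good upper tail under this reweighting'' is equivalent to ``a fixed vertex has large degree in a uniform random plane graph only with exponentially small probability,'' a bound that must hold uniformly over all point sets. This is the kind of estimate that cross-graph charging schemes are built for: one assigns each plane graph on $\pts$ a budget of $n$ (one unit per vertex) and routes the unit held by every vertex of degree $0$, $1$, or $\ge4$ to a degree-$2$-or-$3$ vertex of a plane graph obtained by a canonical local modification near that vertex --- inserting one or two edges when its degree is too small, deleting the angularly extreme edges at it when the degree is too large --- and the crux is to bound the total charge any single degree-$2$-or-$3$ recipient can accumulate. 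I expect that load-bounding step to be the main obstacle: unwound, it is again the statement that a vertex rarely sees many others, and the accompanying geometric bookkeeping (choosing the local edge insertions and deletions so that they never introduce a crossing, and so that distinct senders route to distinct recipients) is the usual source of technical difficulty in crossing-free counting.
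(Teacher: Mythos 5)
First, note that this statement is not proved in the paper at all: it is a quotation of prior results from \cite{SS13}, so there is no internal proof to match. Judged on its own terms, your write-up is a reduction, not a proof. The deletion/family identity you set up is correct (it is the same ``family'' bookkeeping the paper uses in its proofs of Theorems \ref{th:v0} and \ref{th:AllDegs}): every plane graph of $\pts$ with $\deg(v)=i$ arises from a plane graph $G'$ of $\pts\setminus\{v\}$ in $\binom{k_v(G')}{i}$ ways, and $N=\sum_{G'}2^{k_v(G')}$, giving $\Pr[\deg_G(v)\in\{2,3\}]=\mathbb{E}[g(K_v)]$ under the $2^{k_v}$-reweighted law. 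But this identity is the easy direction; the entire content of Theorem \ref{th:Previous} is the quantitative lower bound on $\sum_v\mathbb{E}[g(K_v)]$, and you explicitly leave that step (``the hard part,'' ``the main obstacle'') unproved. In \cite{SS13} this is exactly where the long and delicate cross-graph charging arguments live, and the specific constants $3207$, $3/1024$, $33/2048$, $1/24$ come out of that analysis; sketching that ``one routes charge from vertices of degree $0$, $1$, or $\ge 4$ to nearby degree-$2$-or-$3$ vertices'' without defining the moves or bounding the load does not establish any of the four inequalities.

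Moreover, the sufficient condition you propose is false as stated. A per-vertex tail bound $\Pr[K_v\ge k]\le C\beta^{-k}$ with absolute constants cannot hold uniformly: in the construction of Lemma \ref{le:construction} the apex $q$ sees all $n-1$ other points in \emph{every} plane graph of $\pts\setminus\{q\}$ (all chords lie below the arc, all segments $qp_i$ above it), so $K_q=n-1$ deterministically and $g(K_q)\to 0$. So any viable route must bound the visibility distribution only \emph{on average over $v$}, and proving such an averaged statement, uniformly over all $n$-point sets, is essentially equivalent to the theorem itself. In short: the reduction is sound and consistent with the paper's machinery, but the proposal is missing the core argument and its stated fallback lemma is not true in the required form.
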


In the current work, we complement the bounds of Theorem \ref{th:Previous}, by deriving upper bounds on the expected degrees in a random plane graph.
Motivated by an application described below, we are most interested in the expected number of isolated vertices in a random plane graph.

\begin{theorem} \label{th:v0}
Let $\pts$ be a set of $n\ge 5$ points with a triangular convex hull. 
Then, 
\[ \hv_0(\pts) < \frac{11n}{112} < \frac{n}{10.18}. \] 
\end{theorem}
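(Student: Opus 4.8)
The plan is to bound, separately for each vertex $v\in\pts$, the probability that $v$ is isolated in a uniformly random plane graph of $\pts$, and then to sum over $v$. Write $Q_v=\pts\setminus\{v\}$, and for a plane graph $H$ of $Q_v$ let $s(H,v)$ be the number of points of $Q_v$ that are \emph{visible} from $v$ in $H$, i.e.\ whose straight segment to $v$ meets no edge of $H$. The basic observation is a bijection: the plane graphs $G$ of $\pts$ with $G-v=H$ are exactly those obtained from $H$ by adding an arbitrary subset of the $s(H,v)$ segments from $v$ to its visible points, there are $2^{s(H,v)}$ of them, and $v$ is isolated in exactly one. Hence every plane graph $G$ of $\pts$ is encoded uniquely by the pair $(G-v,N_G(v))$, so $|\pgset(\pts)|=\sum_{H\in\pgset(Q_v)}2^{s(H,v)}$, and
\[
\Pr[\,v\text{ isolated}\,]=\frac{|\pgset(Q_v)|}{|\pgset(\pts)|}=\frac{1}{\bigl(\text{average of }2^{s(H,v)}\text{ over }H\in\pgset(Q_v)\bigr)} .
\]
Since $\hv_0(\pts)=\sum_v\Pr[v\text{ isolated}]$, everything reduces to showing that the visibility $s(H,v)$ is typically large. (The identical conditioning, with $\binom{s}{i}/2^{s}$ in place of $2^{-s}$, is what gives the $n/\sqrt{\pi i}$ bound for $i\ge1$.)

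The second step is the structural inequality $s(H,v)\ge 3$ for \emph{every} $v$ and every $H$, which is where the triangular hull enters. Complete $H$ to a triangulation $T$ of $Q_v$. If $v$ is not a hull vertex of $\pts$, then $v$ lies inside some triangular face $xyz$ of $T$; that face is convex and contains no other vertex, so $v$ sees $x,y,z$ in $T$, hence also in $H\subseteq T$. If $v$ is a hull vertex, so that the convex hull of $\pts$ is a triangle $vbc$, then every other point of $\pts$ lies in the wedge at $v$ bounded by the rays $vb$ and $vc$, so the part of the boundary of the convex hull of $Q_v$ seen from $v$ is a chain from $b$ to $c$; since $n\ge5$ some interior point of $\pts$ pokes out past the segment $bc$, so that chain contains an extra vertex, and $v$ sees all three of them (or more). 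Therefore $\Pr[v\text{ isolated}]\le 1/8$ for every $v$, giving the baseline $\hv_0(\pts)\le n/8$. (A non-triangular hull would let a hull vertex see only its two hull neighbours, which is exactly what this step rules out.)

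The heart of the proof is to push $1/8$ down to $11/112$. By the displayed identity it suffices to show, for each $v$, that a random plane graph of $Q_v$ satisfies $s(H,v)=3$ with probability at most $8/11$: then the average of $2^{s(H,v)}$ is at least $8\bigl(1+\Pr[s\ge4]\bigr)=8\bigl(2-\Pr[s=3]\bigr)\ge 112/11$, so $\Pr[v\text{ isolated}]\le 11/112$, and summing over $v$ (with the slight slack in the estimates accounting for strictness) yields $\hv_0(\pts)<11n/112<n/10.18$. Now $s(H,v)=3$ occurs exactly when the face of $H$ containing $v$ is a triangle $abc$ with all three of $ab,bc,ca$ present in $H$. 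The cross-graph charging sends the charge of such a configuration to one with larger visibility: delete one canonically chosen edge of that triangle, which merges it with its neighbouring face and forces $v$ to see at least four vertices; the freed charge may, if convenient, be reassigned to one of $a,b,c$ in the new graph — this is the sense in which charge is moved between vertices of different plane graphs. Balancing the scheme so that every (graph, vertex) pair ends with weight at most $11/112$ then proves the bound.

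I expect the main obstacle to be controlling the multiplicity of this deletion map: a single target graph $H'$ can be the image of several source configurations — essentially one for every chord of the face of $H'$ at $v$ that cuts off a triangle containing $v$, and possibly more when $H'$ is disconnected so that face is not a simple polygon. Making this multiplicity (together with the reassignment of charge among $a,b,c$) small enough to yield exactly the constant $11/112$, while separately handling the boundary cases in which the relevant triangle lies on the hull of $Q_v$ and the small values of $n$, is where the real work lies; the triangular-hull hypothesis is what simultaneously makes the $s\ge3$ bound hold at all vertices and keeps those boundary contributions negligible.
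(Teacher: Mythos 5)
Your first half is sound and is essentially the paper's first step: the identity $\pg(\pts)=\sum_{H\in\pgset(\pts\setminus\{v\})}2^{s(H,v)}$ is exactly the paper's decomposition into families of vings, and your visibility-at-least-3 argument (completing $H$ to a triangulation for an internal $v$, and the hull-chain argument for a hull vertex) is a valid alternative to the paper's ray-shooting proof of its Lemma 2.2; this correctly gives $\hv_0(\pts)\le n/8$. The gap is in the heart of the matter, the passage from $1/8$ to $11/112$. You reduce it to the per-vertex claim that $\Pr_H[s(H,v)=3]\le 8/11$ for every fixed $v$, but you do not prove this: the edge-deletion charging you sketch is precisely the step whose multiplicity you concede is uncontrolled, so the constant $11/112$ is posited rather than derived. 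Worse, the per-vertex claim is strictly stronger than what the theorem needs or what the paper establishes. The paper only bounds the \emph{average} over the vertices of a single graph; its argument explicitly allows up to about $(4n-6)/7$ vertices of a graph to keep the full charge $1/8$ (potential exactly $3$), so there is no reason that every individual vertex --- say one tucked deep into a corner of the triangle, which has potential $3$ in a large fraction of graphs --- satisfies $\Pr[s=3]\le 8/11$; your reduction may be aiming at a false statement.

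What the paper does instead, and what is missing from your proposal, is an aggregation over the vertices of one graph. For a fixed $G\in\pgset(\pts)$ it defines the potential $\pot(p,G)$ (the visibility of the family of $(p,G)$), observes that $G$ is contained in some triangulation $T$ with $\pot(p,G)\ge \pot(p,T)=\deg_T(p)$, and then uses two triangulation facts --- $v_3(T)\le 2n/3-1$ and $v_4(T)\le (6n-9v_3(T)-6)/2$ --- to bound the total charge $\sum_{p\in\pts}2^{-\pot(p,G)}$ in any single graph by $(11n-6)/112$, via a short optimization whose worst case is $v_3=(4n-6)/7$ and $v_4=(3n+6)/7$. Summing over graphs then gives $\hv_0(\pts)<11n/112$. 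Without this per-graph aggregation (or a fully worked-out version of your deletion scheme with verified multiplicities and boundary cases), your argument proves only $\hv_0(\pts)\le n/8$.
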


In Lemma \ref{le:construction}, we construct a point set with a triangular convex hull where $\hv_0(\pts)>\frac{n}{23.32}$.
We suspect that the correct bound is much closer to $n/23.32$ than to $n/10.18$.

For the case of vertices with a degree larger than 0, we derive the following result.

\begin{theorem} \label{th:AllDegs}
Let $\pts$ be a set of $n$ points.
Then, for every $i\ge 1$, we have that
\[ \hv_i(\pts) < \frac{n}{\sqrt{\pi i}}. \]
\end{theorem}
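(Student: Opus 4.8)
The plan is to use a cross-graph charging scheme, in the spirit described in the abstract. Fix a finite point set $\pts$ of $n$ points and an integer $i \ge 1$. Let $\pgset$ denote the set of all plane graphs on $\pts$, and let $N = |\pgset|$. The quantity $\hv_i(\pts)$ equals $\frac{1}{N}\sum_{G \in \pgset} d_i(G)$, where $d_i(G)$ is the number of degree-$i$ vertices of $G$. I would fix a vertex $v$ and analyze, across all $G \in \pgset$, how often $v$ has degree exactly $i$; the target bound $\hv_i(\pts) < n/\sqrt{\pi i}$ would follow if I can show that for each individual vertex $v$, the number of plane graphs in which $\deg_G(v) = i$ is less than $N/\sqrt{\pi i}$, and then sum over the $n$ vertices.

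The key step is a local injection/counting argument around a single vertex. Given a plane graph $G$ in which $v$ has degree exactly $i$, let $e_1, \dots, e_i$ be the edges at $v$, in cyclic order. The idea is that the edges incident to $v$ partition the neighborhood of $v$ into wedges, and within each wedge one can toggle the presence of certain edges at $v$ without creating crossings, producing many other plane graphs. More precisely, I would like to set up a map that takes $G$ to the plane graph $G'$ obtained by adding or removing a carefully chosen edge at $v$ (for instance, flipping the ``middle'' candidate edge in the largest wedge, or toggling a canonically chosen edge compatible with $G$), and argue that this shows the degree-$i$ graphs at $v$ are substantially outnumbered by graphs of nearby degrees. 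Iterating or combining these toggles should produce a comparison between the count at degree $i$ and the counts at degrees $i-1, i+1, \dots$, from which a concentration-type inequality emerges. The $\sqrt{\pi i}$ factor strongly suggests that, after this reduction, one is comparing $\binom{m}{i}$-type quantities and invoking the bound $\binom{m}{\lfloor m/2 \rfloor} \cdot 2^{-m} \le 1/\sqrt{\pi m /2}$ or the central binomial estimate $\binom{2k}{k} \sim 4^k/\sqrt{\pi k}$; so I expect the combinatorial core to be: for a fixed vertex $v$, the plane graphs split according to which subset of a linearly independent (crossing-free-compatible) set of candidate edges at $v$ is used, and the worst case is a balanced binomial coefficient.

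Concretely, the steps I would carry out are: (1) reduce the global expectation to a per-vertex statement by linearity; (2) fix $v$ and partition $\pgset$ into classes by the ``rest of the graph'' away from $v$ (i.e., conditioning on $G \setminus \{$edges at $v\}$ together with which edges at $v$ are \emph{available}, meaning crossing-free with the rest); (3) within each class, show that the set of available edges at $v$ that can be simultaneously present is governed by a combinatorial structure (e.g., an interval / laminar structure of wedges) on some ground set of size $m$, so the number of valid degree-$i$ completions is at most $\binom{m}{i}$ while the total number of completions is at least $2^{m}$ scaled appropriately — actually I expect each available edge to be independently toggleable, giving exactly $2^m$ completions and exactly $\binom{m}{i}$ of degree $i$; (4) bound $\binom{m}{i}/2^{m} \le \binom{2i}{i}/4^{i} < 1/\sqrt{\pi i}$ using the central binomial coefficient estimate (the ratio $\binom{m}{i}/2^m$ is maximized over $m$ near $m = 2i$); (5) sum over all classes and all $n$ vertices.

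The main obstacle will be step (3): it is \emph{not} true in general that all edges at $v$ that are individually crossing-free with the rest of the graph can be toggled independently of one another, since two such edges at $v$ might cross \emph{each other}. So the ground set of ``candidate edges at $v$'' does not decompose into a simple product, and I will need to understand the structure of the compatibility hypergraph among edges emanating from a single point $v$ relative to a fixed plane graph on the other points. I anticipate that this structure is still tame — edges at $v$ are chords from $v$ sorted by angle, and pairwise crossings among them, given the fixed outside graph, should have a nested/interval pattern — so that the count of crossing-free degree-$i$ subsets is still bounded by a binomial coefficient $\binom{m}{i}$ with $m$ the number of candidate edges, or can be reorganized into independent blocks each contributing a binomial factor whose product is still dominated by the central-binomial bound. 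Making this structural claim precise, and verifying that the extremal ratio is achieved at the balanced binomial coefficient, is where the real work lies.
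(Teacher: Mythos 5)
Your proposal is essentially the paper's proof: conditioning on the part of the graph away from $v$ (plus which edges at $v$ are available) is exactly the paper's partition of vertex--graph pairs into ``families'' of a $0$-ving, and your step (4), bounding $\binom{m}{i}/2^m$ by its value near $m=2i$ and then by $\binom{2i}{i}/4^i < 1/\sqrt{\pi i}$ via the central binomial estimate, is exactly the paper's charge computation (the paper does the maximization by calculus and the final bound by Robbins' form of Stirling). The obstacle you flag in step (3) is vacuous: two candidate edges at $v$ share the endpoint $v$, and since no three points are collinear they can meet only at $v$, so they never cross each other; hence the available edges at $v$ are independently toggleable, each class has exactly $2^m$ completions of which exactly $\binom{m}{i}$ give degree $i$ at $v$, and your argument goes through as stated.
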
 

We now mention the application that motivated us to derive an upper bound on $\hv_0(\pts)$. 
It is known the every set of $n$ points has $\Omega(11.65^n)$ plane graphs that can be drawn on it \cite{AHHHKV07}.
This bound is tight, since $n$ points in convex position have this number of plane graphs. 
In contrast, it seems plausible that, when the number of points on the convex hull of the point set is small, the number of plane graphs is significantly larger.
When the number of points of the convex hull does not depend on $n$, we are only aware of point sets with $\Omega(23.31^n)$ plane graphs.

Let $\pg^\triangle(n)$ denote the minimum number of plane graphs that $n$ points with a triangular convex hull may have. 
Lemma \ref{le:construction} (from Section \ref{sec:Additional}) states that $\pg^\triangle(n)=O(23.32^n)$. 
We also prove the following lemma in Section \ref{sec:Additional}.
It is based on similar lemmas from \cite{SS11,SS13,SW06}.
For a point set $\pts$ with a triangular convex hull, let $\hv_0^\triangle(\pts)$ be the expected number of 0-vings that are not vertices of the convex hull of $\pts$.

\begin{lemma} \label{le:NumberOfPG}
Consider a fixed $n \geq 2$. 
Let $\delta_{n}>0$ satisfy
$\hv_0^\triangle(\pts) \le \delta_n n$ for every set $\pts$ of $n$
points in $\RR^2$. Then
\[\pg^\triangle(n) \ge \frac{1}{\delta_n} \pg^\triangle(n-1). \]
\end{lemma}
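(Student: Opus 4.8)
The plan is to run a vertex-deletion double count, in the spirit of the recursions in \cite{SS11,SS13,SW06}. Fix an arbitrary set $\pts$ of $n$ points with a triangular convex hull and let $\pgset(\pts)$ denote the family of plane graphs drawable on it. Since $\pg^\triangle(n)$ is the minimum of $|\pgset(\pts)|$ over all such point sets, it suffices to bound $|\pgset(\pts)|$ from below for this fixed $\pts$. Write $I \subseteq \pts$ for the set of points that are not vertices of the convex hull, so $|I| = n-3$.

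The core step is a deletion bijection attached to each $v \in I$. Removing $v$ and its incident edges from a plane graph of $\pts$ yields a plane graph of $\pts \setminus \{v\}$; as $v$ is interior, $\pts \setminus \{v\}$ is an $(n-1)$-point set with the \emph{same} triangular hull, so this graph lies in $\pgset(\pts \setminus \{v\})$ and thus $|\pgset(\pts \setminus \{v\})| \ge \pg^\triangle(n-1)$. Restricting the deletion map to the plane graphs $G$ of $\pts$ in which $v$ is isolated makes it a bijection onto $\pgset(\pts \setminus \{v\})$: the inverse reinserts $v$ as an isolated vertex, which can never create a crossing. Hence, for every $v \in I$,
\[
 \bigl|\{\, G \in \pgset(\pts) : v \text{ is isolated in } G \,\}\bigr| \;=\; |\pgset(\pts \setminus \{v\})| \;\ge\; \pg^\triangle(n-1).
\]

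Finally I would divide by $|\pgset(\pts)|$ to read this as $\Pr[\,v \text{ isolated}\,] \ge \pg^\triangle(n-1)/|\pgset(\pts)|$, and sum over $v \in I$ using linearity of expectation:
\[
 \hv_0^\triangle(\pts) \;=\; \sum_{v \in I} \Pr[\,v \text{ isolated}\,] \;\ge\; \frac{(n-3)\,\pg^\triangle(n-1)}{|\pgset(\pts)|}.
\]
Combining this with the hypothesis and rearranging gives the desired lower bound on $|\pgset(\pts)|$, hence on $\pg^\triangle(n)$ after minimizing over $\pts$. The only genuine subtlety is normalization: the argument naturally divides by the number $n-3$ of interior vertices rather than by $n$, so the clean constant $1/\delta_n$ comes out exactly when the hypothesis is read as $\hv_0^\triangle(\pts)\le \delta_n(n-3)$, and otherwise one loses only the immaterial factor $1-3/n$. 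I expect the main point to watch is simply the restriction to non-hull vertices: that restriction is exactly what preserves triangularity of the hull under $\pts \mapsto \pts \setminus \{v\}$ (the reason $\hv_0^\triangle$ ignores hull vertices), with the two directions of the bijection and the small-$n$ boundary cases (where $I$ may be empty) being routine.
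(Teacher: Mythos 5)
Your proposal is essentially the same vertex-deletion double count as the paper's proof: the paper likewise pairs each graph with its isolated internal vertices via the bijection with $\pgset(\pts\setminus\{q\})$, obtaining $\hv_0^\triangle(\pts)\cdot\pg(\pts)=\sum_{q\ \mathrm{internal}}\pg(\pts\setminus\{q\})$ and bounding each summand below by $\pg^\triangle(n-1)$, exactly as you do. The normalization point you flag is real, but it is a looseness in the paper's own write-up rather than a defect of your argument: the paper's sum runs over the $n-3$ internal points yet its right-hand side is asserted to be at least $n\cdot\pg^\triangle(n-1)$, which is precisely the $n$ versus $n-3$ discrepancy you identify; taken literally, both arguments give $\pg^\triangle(n)\ge\frac{n-3}{\delta_n n}\,\pg^\triangle(n-1)$, and the lost factor $1-3/n$ is immaterial for the exponential lower bounds the lemma is used for.
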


Combining Theorem \ref{th:v0} with Lemma \ref{le:NumberOfPG} leads to a minimum of $\Omega(10.18^{n})$ plane graphs,\footnote{Theorem \ref{th:v0} provides an upper bound for $\hv_0(\pts)$ rather than $\hv_0^\triangle(\pts)$. However, since the convex hull is a triangle, we have that $\hv_0^\triangle(\pts)\le\hv_0(\pts)-3$. 
When $n$ is sufficiently large, $\hv_0^\triangle(\pts)<11n/112-3< n/10.18$.} which is not a new result.
However, further improving $n/10.18$ to $n/c$ for $c>11.65$ will lead to a new result. 
It seems plausible that our approach could be pushed further to obtain such a result. 
We suggest this as a direction for future work. 

\emph{charging schemes} are a common technique in graph theory, where one moves charge among vertices of a graph.
For example, the proof of the four color theorem involves a charging scheme (for example, see \cite{AH77,RSST97}).
Our proofs rely on a \emph{cross-graph charging scheme}, where charge moves between vertices of different graph of the same point set.
Such interaction between different graphs allows us to study properties of the average graph, or the expected properties of a random graph.

In Section \ref{sec:theorems}, we provide the main portions of the proofs of Theorems \ref{th:v0} and \ref{th:AllDegs}.
In Section \ref{sec:Lemmas}, we prove technical lemmas that are used in Section \ref{sec:theorems}.
Finally, in Section \ref{sec:Additional}, we prove Lemma \ref{le:NumberOfPG} and study a construction with $\hv_0(\pts)>\frac{n}{23.32}$.

\section{Proofs of Theorems \ref{th:v0} and \ref{th:AllDegs}} \label{sec:theorems}

In this section, we prove Theorems \ref{th:v0} and \ref{th:AllDegs}. 
To focus on the main aspects of these proofs, we postpone the proofs of some lemmas to Section \ref{sec:Lemmas}.
We begin with the proof of Theorem \ref{th:AllDegs}, since the proof of Theorem \ref{th:v0} begins in a similar manner and then has several additional steps.

We denote the set of plane graphs of $\pts$ as $\pgset(\pts)$ and set $\pg(\pts) = |\pgset(\pts)|$.
For a graph $G\in \pgset(\pts)$, we denote the number of degree $i$ vertices in $G$ as $v_i(G)$.
We denote the expected value of $v_i(G)$, for a graph $G$ uniformly chosen from $\pgset(\pts)$, as
\[ \hv_i(\pts) = \frac{
\sum_{G\in \pgset(\pts)} v_i(G)}{\pg(\pts)}. \]

A \emph{ving} (short for Vertex IN Graph) is a pair $(p,G)\in \pts \times \pgset(\pts)$.
Intuitively, rather than a point of $\pts$, a ving is an instance of that point in a specific plane graph.
The degree of the ving $(p,G)$ is the degree of $p$ in $G$. 
We also refer to a ving of degree $i$ as an $i$-ving.
See Figure \ref{fi:Vings}.
Our proofs rely on charging schemes where charge moves among vings from different graphs (rather than the more common transfer of charge between vertices of the same graph).

\begin{figure}[ht]
\centerline{\includegraphics[width=0.4\textwidth]{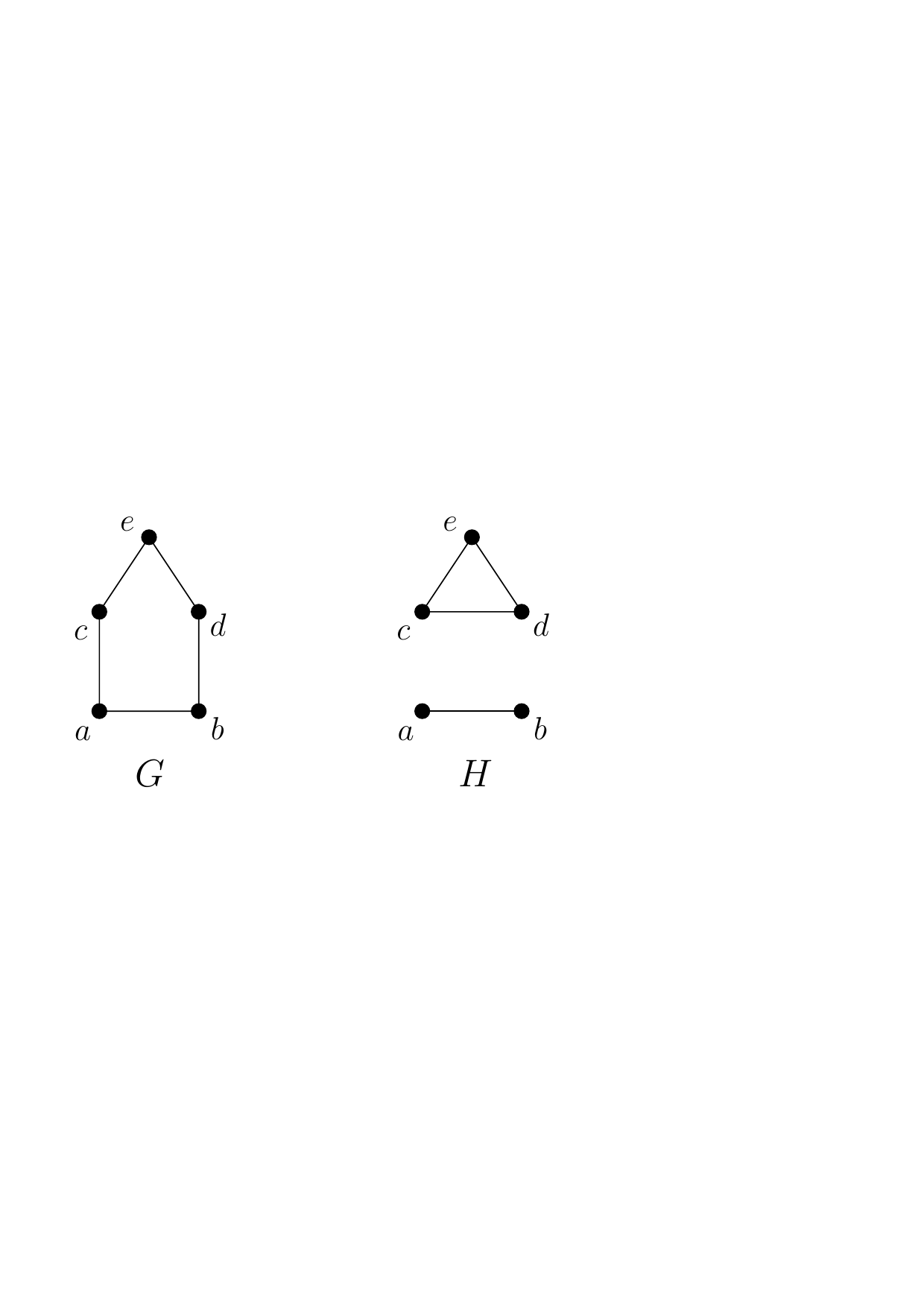}}
\caption{Plane graphs $G,H$ of the same point set $\pts=\{a,b,c,d,e\}$. Here, $(a,G)$ is a 2-ving, while $(a,H)$ is a 1-ving.}
\label{fi:Vings}
\end{figure}

Consider a graph $G$ and two vertices $u,v$ from $G$.
We say that $v$ \emph{is visible from} $u$ if the edge $uv$ is not in $G$ and does not intersect any edge of $G$.
For example, in Figure \ref{fi:Vings}, the vertices that are visible from $a$ in $G$ are $d,e$. 
The vertices that are visible from $a$ in $H$ are $c,d$.
The \emph{visibility} of a ving $(p,G)$ is the number of vertices that are visible from $p$ in $G$.
In Figure \ref{fi:Vings}, the ving $(a,G)$ has visibility 2, and $(e,H)$ has visibility 0.

We are now ready to prove Theorem \ref{th:AllDegs}. 
We first recall the statement of this result.
\vspace{2mm}

\noindent {\bf Theorem \ref{th:AllDegs}.}
\emph{Let $\pts$ be a set of $n$ points.
Then, for every $i\ge 1$, we have that}
\[ \hv_i(\pts) < \frac{n}{\sqrt{\pi i}}. \]
\begin{proof}
We fix an $i\ge 1$ and give every $i$-ving of every graph $G\in \pgset(\pts)$ a charge of 1.
All vings with degrees other than $i$ start with 0 charge. 
Let $t$ be the total charge summed over all vings of all graphs of $\pgset(\pts)$. 
Then 
\begin{equation} \label{eq:ChargeInitial}
t=\sum_{G\in \pgset(\pts)} v_i(G) = \hv_i(\pts) \cdot \pg(\pts).
\end{equation}
We will redistribute the charge among vings of all degrees and then derive an upper bound on the charge that a ving may have.
This will lead to an upper bound on $t$. 
Combining this bound with \eqref{eq:ChargeInitial} will complete the proof.

\begin{figure}[ht]
\centerline{\includegraphics[width=0.6\textwidth]{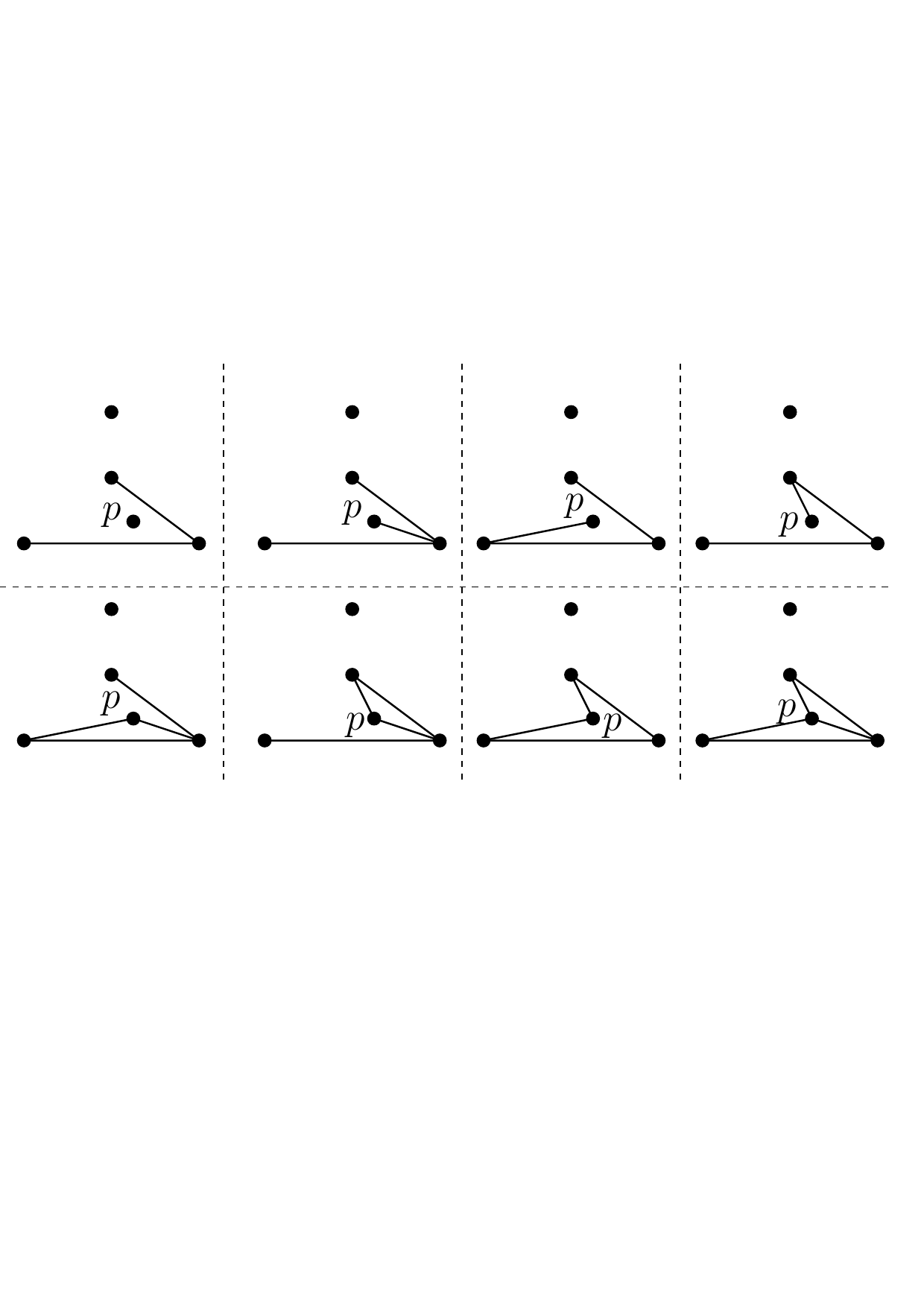}}
\caption{The family of the 0-ving $(p,G)$ in the top left corner. This is a 3-family, so it has eight vings.}
\label{fi:family}
\end{figure}

The \emph{family} of a 0-ving $v=(p,G)\in \pts \times \pgset(\pts)$ is the set of all vings obtained by starting with $v$ and connecting $p$ to any number of vertices visible from $p$.
For example, see Figure \ref{fi:family}.
If $p$ has visibility $j$ in $G$, then we refer to the corresponding family as a $j$\emph{-family}.
We note that a $j$-family consists of $2^j$ vings, all with the same point of $\pts$.
We also say that \emph{the visibility of the family} is $j$.
We claim that each ving $(q,H)$ belongs to exactly one family. 
Indeed, $(q,H)$ belongs to the family of the 0-ving obtained by removing from $H$ all edges that have $q$ as an endpoint.

Every $i$-ving belongs to a family of visibility at least $i$.
For $j\ge i$, the number of $i$-vings in a $j$-family is $\binom{j}{i}$.
Thus, the total charge in a $j$-family is $\binom{j}{i}$.
We redistribute this charge evenly among all $2^j$ vings of the family. 
Then, each ving of the $j$-family has a charge of
\[ \frac{\binom{j}{i}}{2^j} = i! \cdot \frac{j\cdot (j-1)\cdots (j-i+1)}{2^j}. \] 

We study the maximum charge that a ving can have after the above redistribution. 
That is, we study the maximum of the above expression, as a function of $j$.
When looking for this maximum, we may ignore the factor $i!$, since it does not depend on $j$.
We thus define 
\[ f(x) = \frac{x\cdot (x-1) \cdots(x-i+1)}{2^x}.\]

We are only interested in the range $x\ge i$.
In this range, we wish to find when $f(x)$ is increasing and when it is decreasing.
The quotient rule states that
\[ f'(x) = \frac{X \cdot 2^x - 2^x 
\cdot \ln 2 \cdot x\cdot (x-1) \cdots(x-i+1)}{2^{2x}}, \]
where
\[ X = \frac{x\cdot (x-1) \cdots(x-i+1)}{x} + \frac{x\cdot (x-1) \cdots(x-i+1)}{x-1} + \cdots + \frac{x\cdot (x-1) \cdots(x-i+1)}{x-i+1}. \]
Simplifying the above leads to 
\[ f'(x) = \frac{x\cdot (x-1) \cdots(x-i+1)}{2^x}\cdot \left(\frac{1}{x}+ \frac{1}{x-1}+\cdots+\frac{1}{x-i+1} - \ln 2\right).\]

In our range of $x\ge i$, we have that $\frac{x\cdot (x-1) \cdots(x-i+1)}{2^x}> 0$.
That is, the sign of $f'(x)$ depends only on the expression in the parentheses to the right.
This expression is negative for any sufficiently large $x$, and positive for small $x\ge i$. 
To find the maximum value of $f(x)$, we search for the $x$ where the sign of $f'(x)$ changes.

As shown in \cite{BW71}, the Euler--Maclaurin formula implies that 
\begin{equation} \label{eq:Harmonic}
\sum_{k=1}^m \frac{1}{k}= \ln m + \gamma +\frac{1}{2m} - \eps_m, 
\end{equation}
where $\gamma \approx 0.5772$ (the Euler--Mascheroni constant) and $0\le \eps_n \le 1/8m^2$.

By applying \eqref{eq:Harmonic} twice, with $x=2i$ and $x=i$, we obtain
\begin{align*} 
\frac{1}{2i}+ \frac{1}{2i-1} &+\cdots+\frac{1}{i+1} =  \sum_{k=1}^{2i} \frac{1}{k} - \sum_{k=1}^{i} \frac{1}{k} 
\\
&= \ln 2i + \gamma +\frac{1}{4i} - \eps_{2i} - (\ln i + \gamma +\frac{1}{2i} - \eps_i) = \ln 2 - \frac{1}{4i} - \eps_{2i}+\eps_i < \ln 2.
\end{align*}
For the final transition, we note that $\eps_i\le 1/8i^2 < 1/4i$.
This implies that $f'(2i)<0$.

We next compute
\[ f(2i) = \frac{(2i)\cdot(2i-1)\cdots(i+1)}{2^{2i}} = \frac{(2i-1)\cdot(2i-1)\cdots (i)}{2^{2i-1}} = f(2i-1). \]
Combining this with $f'(2i)<0$ implies that $f'(x)$ switches from positive to negative for some $2i-1<x<2i$.
This in turn implies that the maximum charge of a ving is obtained for both $j=2i$ and $j=2n-1$. 

One variant of Stirling's approximation \cite{Robbins55} states that 
\[ \sqrt{2\pi m}\left(m/e\right)^m e^{\frac{1}{12m+1}} < m! < \sqrt{2\pi m}\left(m/e\right)^m e^{
\frac{1}{12m}}.\]
Thus, the maximum ving charge is 
\[ \frac{\binom{2i}{i}}{2^{2i}} = \frac{(2i)!}{(i!)^2\cdot 2^{2i}} < \frac{\sqrt{4\pi i}\left(2i/e\right)^{2i} e^{\frac{1}{24i}}}{2\pi i\left(i/e\right)^{2i} e^{\frac{2}{12i+1}}\cdot 2^{2i}} = \frac{\sqrt{\pi i} e^{\frac{1}{24i}}}{\pi i e^{\frac{2}{12i+1}}} < \frac{1}{\sqrt{\pi i}}. \] 

The above implies that 
\[ t < \sum_{G\in \pgset(\pts)} \frac{n}{\sqrt{\pi i}} = \pg(\pts) \cdot \frac{n}{\sqrt{\pi i}}. \]
Combining this with \eqref{eq:ChargeInitial} leads to $\hv_i(\pts) < n/\sqrt{\pi i}$.
\end{proof}

Let $\pts$ be a set of points in $\RR^2$. 
We say that a point $p\in \pts$ is a \emph{hull point} if $p$ is a vertex of the convex hull of $\pts$.
If $p$ is not a hull point, then we say that it is an \emph{internal point}.
Similarly, a vertex in a graph of $\pgset(\pts)$ is a \emph{hull vertex} if it corresponds to a hull point of $\pts$.
Otherwise, it is an \emph{internal vertex}.

\begin{figure}[ht]
\centerline{\includegraphics[width=0.45\textwidth]{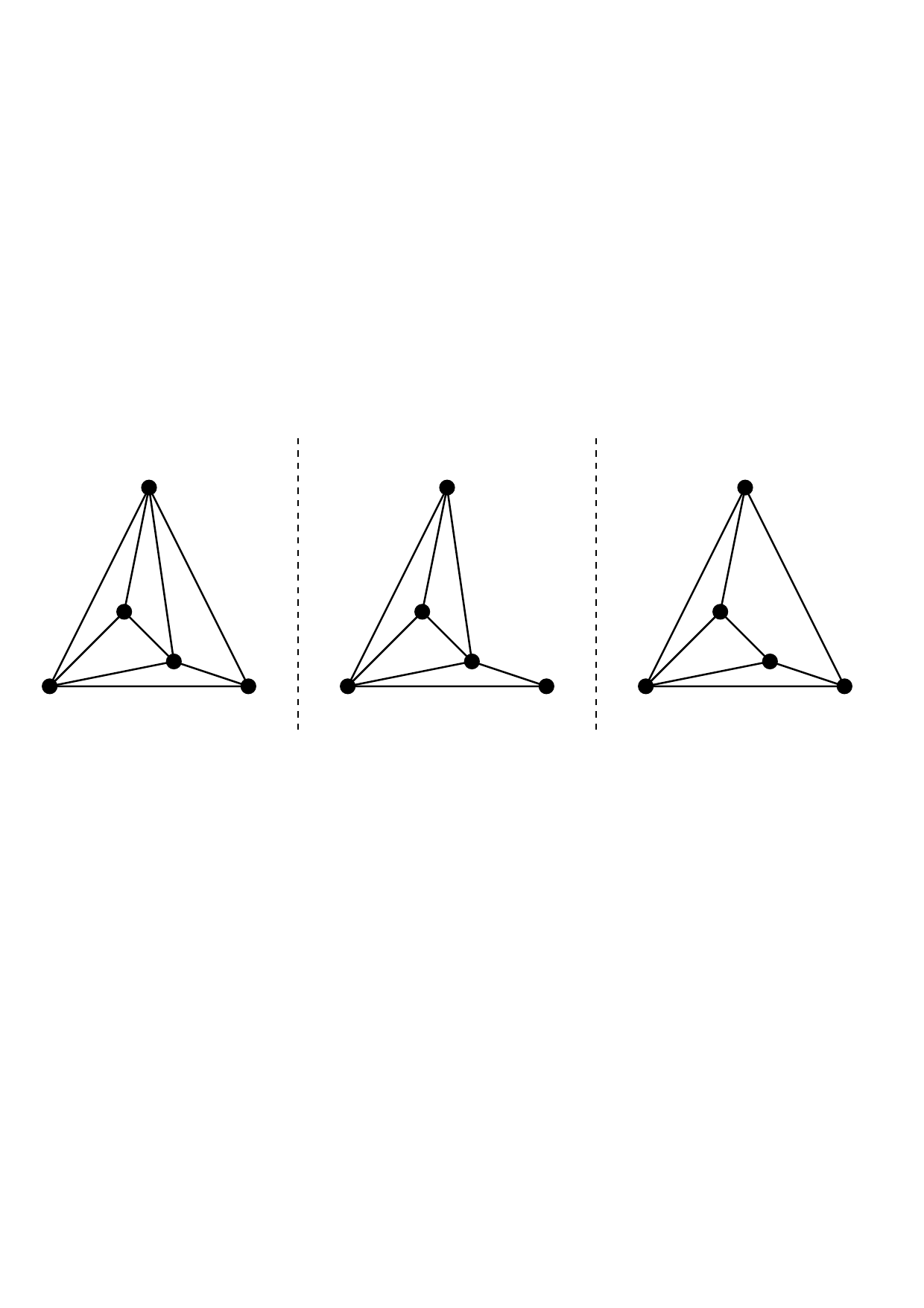}}
\caption{Three plane graphs of the same point set. Only the leftmost one is a triangulation.}
\label{fi:triangulation}
\end{figure}

A \emph{triangulation} of $\pts$ is a plane graph $G\in \pgset(\pts)$ where all the bounded faces are triangles and all edges of the convex hull are present. 
See Figure \ref{fi:triangulation}.
Equivalently, a triangulation is a maximal plane graph, in the sense that no additional edges can be added. 
Indeed, if a bounded face is not a triangle, we can always add one of its diagonals. 
In the other direction, if we can add an edge to a plane graph, then an edge of the convex hull is missing or one of the bounded faces is not a triangle.

We are now ready to prove Theorem \ref{th:v0}.
We first recall the statement of this result.
\vspace{2mm}

\noindent {\bf Theorem \ref{th:v0}.}
\emph{Let $\pts$ be a set of $n\ge 5$ points with a triangular convex hull. 
Then,}
\[ \hv_0(\pts) < \frac{11n}{112} < \frac{n}{10.18}. \] 

\begin{proof}
We begin as in the proof of Theorem \ref{th:AllDegs}.
That is, we give every 0-ving in every graph of $\pgset(\pts)$ a charge of 1.
All other vings start with a charge of 0.
Let $t$ be the total charge, summing up the charge of all vings in all graphs of $\pgset(\pts)$. 
Then 
\begin{equation} \label{eq:ChargeInitial2a}
t=\sum_{G\in \pgset(\pts)} v_0(G) = \hv_0(\pts) \cdot \pg(\pts).
\end{equation}

We consider families of vings, as defined in the proof of Theorem \ref{th:AllDegs}.
Since each family has exactly one 0-ving, the total charge in each family is 1. 
We equally distribute this charge among the vings of the family. 
Thus, each ving in a $j$-family receives a charge of $1/2^j$.

See Section \ref{sec:Lemmas} for a proof of the following lemma. 

\begin{lemma} \label{le:visibility}
Every 0-ving of every graph $G\in \pgset(\pts)$ has visibility at least 3. 
\end{lemma}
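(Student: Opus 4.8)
The plan is to show that a 0-ving $(p,G)$ — meaning $p$ is isolated in the plane graph $G$ — always sees at least three other vertices. The starting observation is that $p$ is isolated in $G$, so $G$ is a plane graph on $\pts$ in which $p$ participates in no edge. I would consider the plane graph $G' = G \setminus \{p\}$ obtained by deleting the vertex $p$ (this changes nothing since $p$ has no incident edges) and then argue about where $p$ sits relative to $G'$. Since $p$ is an internal point is \emph{not} assumed, I must handle both the case where $p$ is a hull vertex of $\pts$ and the case where $p$ is internal.

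First I would handle the case in which $p$ is an internal point. Then $p$ lies in the interior of the convex hull, hence inside some bounded face $F$ of any triangulation extending $G$; but I want to work with $G$ itself, so instead I note that $p$ lies in some face of the arrangement formed by the edges of $G$. Extend $G$ to a triangulation $T$ of $\pts$ (possible since all hull edges can be added and every non-triangular face subdivided, as noted just before the theorem). In $T$, either $p$ is a vertex of a bounded triangular face $F = \triangle xyz$ with $p$ in its interior, or $p$ is itself one of the three corners — but $p$ is a vertex of $\pts$, so it is a corner of several triangles of $T$, not strictly interior to one. The cleaner route: in $T$, vertex $p$ has some set of neighbors $N_T(p)$, and since $p$ is internal these neighbors form a cycle (the link of $p$) of length at least $3$. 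Now in $G$, the vertex $p$ has no edges, and $G \subseteq T$ with the edges incident to $p$ removed, plus possibly further edges removed. The key point is that deleting edges only \emph{increases} visibility: if $v$ is visible from $p$ in $T$ (meaning $pv$ crosses no edge of $T$), then $pv$ crosses no edge of the subgraph either. So it suffices to show $p$ sees at least $3$ vertices in the triangulation $T$ where $p$ is isolated — but $p$ is not isolated in $T$. Instead I would take $T$ to be a triangulation of $\pts \setminus \{p\}$ together with the single point $p$ placed inside whichever triangle $\triangle xyz$ of that triangulation contains it; then $x,y,z$ are all visible from $p$ (the segments $px, py, pz$ lie inside $\triangle xyz$ and cross nothing), giving visibility $\ge 3$. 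Since $G$ with $p$'s edges removed is obtained from \emph{some} plane graph on $\pts \setminus \{p\}$ by possibly deleting edges, and such a plane graph extends to a triangulation of $\pts \setminus \{p\}$, the visibility of $(p,G)$ is at least the visibility in that triangulation, which is at least $3$.

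For the case in which $p$ is a hull vertex, the convex hull of $\pts$ is a triangle, so $p$ is one of exactly three hull points; the other two hull points, call them $a$ and $b$, are the hull neighbors of $p$. I would argue that $p$ always sees $a$ and $b$ (the hull edges $pa, pb$ are not crossed by any edge of any plane graph, since all edges lie inside the hull and a hull edge only touches the boundary), and then produce a third visible vertex: consider the triangulation of $\pts \setminus \{p\}$ restricted near the edge $ab$ — the point $p$ lies outside $\mathrm{conv}(\pts \setminus \{p\})$ beyond the edge $ab$... no, $p$ is a hull vertex of $\pts$ so $\pts \setminus \{p\}$ has $ab$ as a hull edge, and $p$ sees the entire edge $ab$. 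Triangulate $\pts \setminus \{p\}$; the triangle of this triangulation having $ab$ as a side, say $\triangle abc$, has its third vertex $c$ visible from $p$ because the segment $pc$ passes through the region bounded by $pa$, $pb$, and $ab$ and then into $\triangle abc$, crossing no edge. Hence $p$ sees $a$, $b$, and $c$ in that triangulation, so visibility $\ge 3$; and as before, $(p,G)$ has visibility at least this much since $G$ minus $p$'s edges is a subgraph of some triangulation of $\pts \setminus \{p\}$.

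The main obstacle I anticipate is the bookkeeping needed to make precise the claim that ``$G$ with the edges at $p$ removed embeds into a triangulation of $\pts \setminus \{p\}$, and visibility of $p$ can only go up under edge deletion.'' The monotonicity of visibility under edge deletion is immediate from the definition (fewer edges to cross), so the real content is just that a plane graph on $\pts \setminus \{p\}$ always extends to a triangulation of $\pts \setminus \{p\}$, which is standard. The other delicate point is verifying that the third vertex $c$ (or $x,y,z$ in the internal case) is genuinely visible — i.e., that the connecting segment stays within a single triangle of the triangulation, or within a triangle-plus-adjacent-sliver region, and therefore crosses no triangulation edge; this is a short convexity argument but needs to be stated carefully, and the hypothesis $n \ge 5$ presumably guarantees the relevant triangle $\triangle abc$ or $\triangle xyz$ actually exists and has a genuine third vertex distinct from the others.
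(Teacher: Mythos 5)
Your hull-vertex case has a genuine gap: the third vertex you name need not be visible from $p$. Concretely, take $p=(0,10)$, $a=(-10,0)$, $b=(10,0)$, $d=(-1,5)$, $e=(1,5)$, $c=(0,1)$ (so $n=6$, no three collinear, and the hull is the triangle $pab$), and let $G$ consist of the single edge $de$, so that $(p,G)$ is a $0$-ving. The graph $G$ restricted to $\pts\setminus\{p\}$ extends to the fan triangulation in which $c$ is joined to $a,d,e,b$; its triangle with side $ab$ is exactly $\triangle abc$. But the segment $pc$ is the vertical segment $x=0$, $1\le y\le 10$, and it crosses the edge $de$ at $(0,5)$ --- an edge of the triangulation and even of $G$ itself --- so $c$ is not visible from $p$. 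The underlying errors are the claims that ``$p$ sees the entire edge $ab$'' and that $pc$ ``passes through the region bounded by $pa$, $pb$, and $ab$ \ldots crossing no edge'': that region is all of $\mathrm{conv}(\pts)$, and since $p$ lies on the same side of the line $ab$ as $\mathrm{conv}(\pts\setminus\{p\})$, edges along the chain of $\mathrm{conv}(\pts\setminus\{p\})$ facing $p$ (here $de$) can separate $p$ from the triangle over $ab$. The lemma still holds in this example ($d$ and $e$ are visible), but your argument does not find such vertices; this is not merely a ``delicate point to state carefully,'' the stated claim is false.

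Your internal-vertex case, by contrast, is correct and is a genuinely different (and arguably cleaner) route than the paper's: the paper shoots rays from $p$ into two half-planes and takes endpoints of ``radially first'' edges, while you place $p$ strictly inside a triangle of a triangulation of $\pts\setminus\{p\}$ extending $G$ (general position rules out $p$ lying on an edge) and use monotonicity of visibility under edge deletion; both steps are sound. To repair the hull case you could either follow the paper --- among the edges of $G$ take one that is first hit by rays from $p$; both its endpoints are visible and at most one is a hull vertex, which together with $a$ and $b$ gives visibility at least $3$ (and if $G$ has no such edges, all internal points are visible and $n\ge 5$ finishes the job) --- or stay closer to your approach by choosing the third vertex on the chain of $\partial\,\mathrm{conv}(\pts\setminus\{p\})$ visible from $p$: the tangency points from $p$ are $a$ and $b$, the edge $ab$ lies on the far chain, so the near chain contains some vertex $d\notin\{a,b\}$, and $pd$ meets $\mathrm{conv}(\pts\setminus\{p\})$ only at $d$, so no edge of any plane graph on $\pts\setminus\{p\}$ can block it.
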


Lemma \ref{le:visibility} implies that the maximum ving charge is $1/2^3=1/8$.
This implies that 
\[ t\le \sum_{G\in \pgset(\pts)} \frac{n}{8} = \pg(\pts) \cdot \frac{n}{8}. \]
Combining this with \eqref{eq:ChargeInitial2a} leads to $\hv_0(\pts) \le n/8$.

\parag{Studying the charge in a graph $G$.}
To improve the above bound, we study the total charge that a graph $G\in \pgset(\pts)$ may contain. 

\begin{figure}[ht]
\centerline{\includegraphics[width=0.35\textwidth]{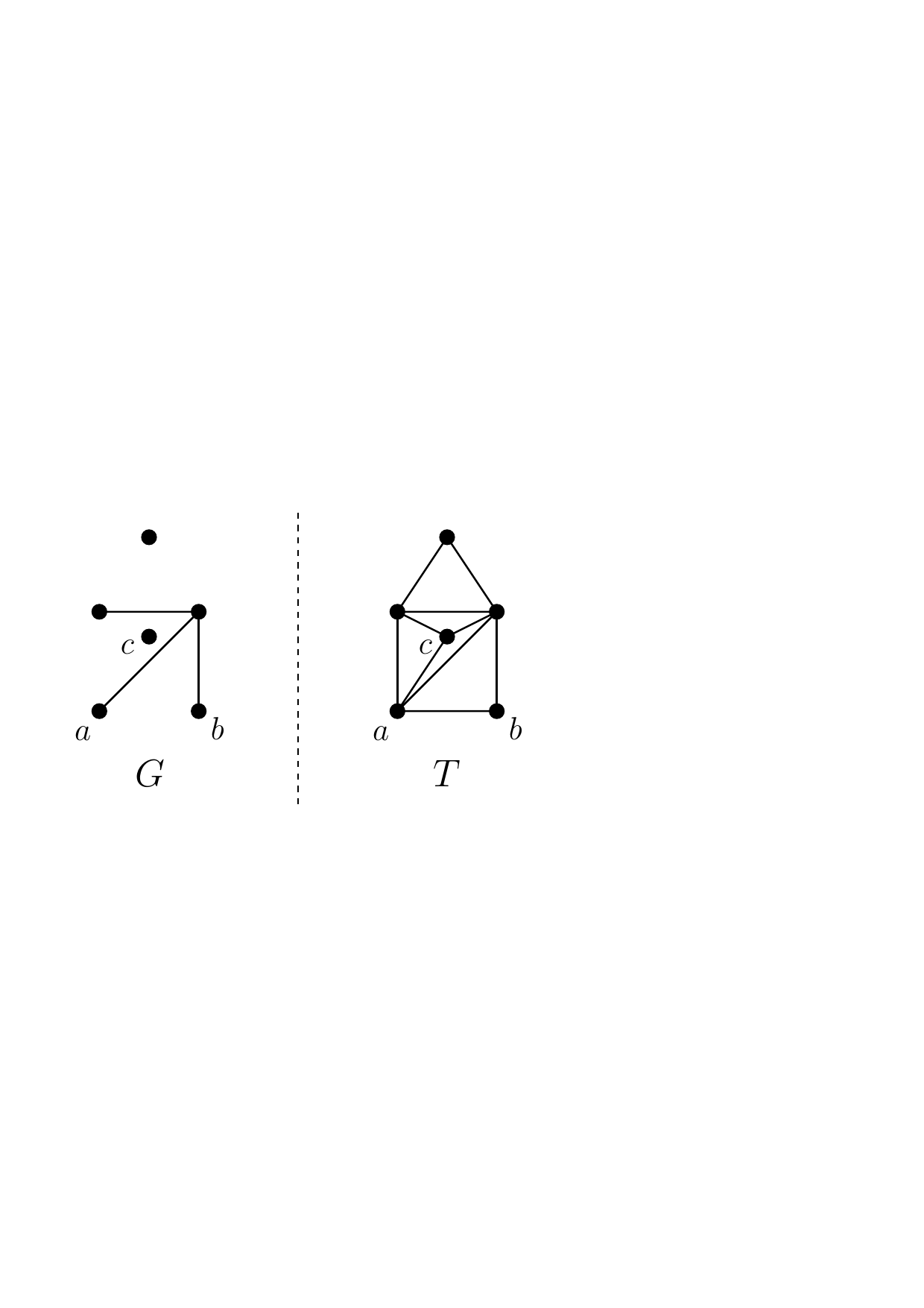}}
\caption{The triangulation $T$ contains the plane graph $G$. 
In $G$, the potentials of $a,b,c$ are $4,2,3$, respecitively. 
In $T$, the degrees of $a,b,c$ are $4,2,3$, respectively. }
\label{fi:potential}
\end{figure}

We fix a $G\in \pgset(\pts)$ and a ving $v=(p,G)$.
Let the \emph{potential} of $v$ be the degree of $p$ in $G$ plus the number of vertices of $G$ that are visible from $G$.
In other words, the potential of $v$ is the number of vertices it is connected to or can be connected to.
See the left side of Figure \ref{fi:potential}.
We denote this quantity as $\pot(p,G)$.

The potential of a ving $v=(p,G)$ is equal to the visibility of the family of $v$.
Indeed, it is the visibility of the 0-ving obtained by removing all edges adjacent to $p$ in $G$. 
Thus, the total charge in $G$ is
\[ \sum_{p\in \pts} \frac{1}{2^{\pot(p,G)}}. \]

By definition, $G$ is contained in a triangulation $T\in \pgset(\pts)$.
Indeed, we can obtain such a triangulation by repeatedly adding edges to $G$, until this is no longer possible.
If $G$ is contained in multiple triangulations, we arbitrarily choose one to be $T$.

We claim that every $p\in \pts$ satisfies that $\pot(p,G)\ge \pot(p,T)$.
Indeed, since a triangulation is a maximal plane graph, no vertex is visible from another vertex in $T$.
That is, $\pot(p,T)$ is the degree of $p$ in $T$.
Every vertex that is connected to $p$ in $T$ is either also connected to $p$ in $G$ or visible from $p$ in $G$.
See Figure \ref{fi:potential}.
See Section \ref{sec:Lemmas} for a proof of the following lemma. 

\begin{lemma} \label{le:deg3}
Every triangulation $T\in \pgset(\pts)$ satisfies that $v_3(T)\le 2n/3-1$.
\end{lemma}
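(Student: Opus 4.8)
The plan is to split the degree-$3$ vertices of a triangulation $T\in\pgset(\pts)$ into internal and hull ones, write $s_i$ and $s_h$ for their numbers (so $v_3(T)=s_i+s_h$), and bound the two parts by different means: a face-disjointness argument for the $s_i$ internal ones, and a direct structural analysis that forces $s_h\le 1$. Since the convex hull of $\pts$ is a triangle and no three points are collinear, Euler's formula shows that $T$ has $3n-6$ edges and exactly $2n-5$ bounded faces, each of them a triangle. The target inequality will then drop out of $3s_i+2s_h\le 2n-5$ together with $s_h\le 1$, since these give $v_3(T)=s_i+s_h\le (2n-5+s_h)/3\le (2n-4)/3<2n/3-1$.

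I would first record two structural facts, both of which use the hypothesis $n\ge 5$ (and indeed fail for $n=4$, where the triangle with one central point has all four vertices of degree $3$). \emph{Fact 1: no two internal degree-$3$ vertices are adjacent.} If $v,v'$ were such a pair, the two triangular faces of $T$ containing the edge $vv'$ would force the neighbour sets of $v$ and $v'$ to be $\{v',a,b\}$ and $\{v,a,b\}$ for two common vertices $a,b$, with $ab$ also an edge of $T$; then the triangle $v'ab$ would at once bound the union of the three faces of $T$ at $v$ (and hence contain $v$ in its interior) and be one of the three faces of $T$ at $v'$, which is absurd. \emph{Fact 2: at most one hull vertex has degree $3$, and a degree-$3$ hull vertex has no degree-$3$ internal neighbour.} If a hull vertex $x$ has degree $3$, its unique non-hull neighbour $p$ is internal, the two faces of $T$ at $x$ are the triangles $xyp$ and $xpz$ (with $y,z$ the other two hull vertices), so $p$ is joined to all three hull vertices and the two of the three sub-triangles $xyp$, $ypz$, $zpx$ meeting $x$ are empty faces of $T$. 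If $p$ also had degree $3$, its only faces would be $xyp$, $ypz$, $zpx$, all empty, forcing $n=4$; and if a second hull vertex $y$ had degree $3$, then (its third neighbour being forced to equal $p$ by the unique bounded face on the hull edge $xy$) the sub-triangles meeting $y$ would be empty too, again making all three empty and $n=4$.

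The counting step is then short. To each internal degree-$3$ vertex $v$ assign the set $\mathcal F(v)$ of the three bounded faces of $T$ incident to $v$, and to the degree-$3$ hull vertex, if one exists, assign the two bounded faces incident to it. Every face in any of these sets contains its assigned degree-$3$ vertex, so Fact 1 makes the sets $\mathcal F(v)$ (over internal degree-$3$ vertices $v$) pairwise disjoint, while Fact 2 shows that neither face assigned to the hull vertex lies in any $\mathcal F(v)$, since a common face would contain a degree-$3$ internal vertex adjacent to the degree-$3$ hull vertex. Hence all these faces are distinct bounded faces of $T$, giving $3s_i+2s_h\le 2n-5$; combined with $s_h\le 1$ from Fact 2 this yields the claimed bound.

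The main obstacle is Fact 2, and more generally the handling of degree-$3$ hull vertices: a pure face count gives only a bound of roughly $2n/3$, and a naive edge- or degree-count is no sharper, so one genuinely has to exploit that a degree-$3$ hull vertex pins down a large, rigid portion of the triangulation, which is precisely where $n\ge 5$ enters. Once Facts 1 and 2 are established, the arithmetic is immediate, so essentially all of the work sits in the (short but delicate) planarity arguments behind those two facts.
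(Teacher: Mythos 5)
Your proof is correct, and it reaches the (marginally stronger) bound $v_3(T)\le (2n-4)/3$. The route differs from the paper's in the counting step. The paper deletes all internal degree-$3$ vertices to obtain a triangulation $T'$ of the remaining point set $\pts_T$, counts its $2|\pts_T|-5$ bounded triangles, observes that each such triangle holds at most one deleted vertex (using that two internal degree-$3$ vertices cannot be adjacent), adds the fact that at most one hull vertex has degree $3$, and then solves the resulting inequality $v_3(T)\le 2(n-v_3(T)+1)-5$. You instead charge bounded faces of $T$ itself: three faces to each internal degree-$3$ vertex and two to the at-most-one degree-$3$ hull vertex, with disjointness enforced by your Facts 1 and 2, giving $3s_i+2s_h\le 2n-5$ directly. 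Both arguments rest on the same two structural facts (at most one degree-$3$ hull vertex; no adjacencies among internal degree-$3$ vertices, and none between these and a degree-$3$ hull vertex), but you prove the non-adjacency claims explicitly, whereas the paper asserts the internal non-adjacency without proof; on the other hand, your version sidesteps the (implicit) verification in the paper that deleting the internal degree-$3$ vertices really leaves a triangulation of $\pts_T$. Your reliance on $n\ge 5$ and the triangular hull matches the paper's standing assumptions, so that is not a gap.
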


By Lemma \ref{le:deg3}, at most $2n/3-1$ vings of $G$ have a charge of $1/8$. 
Every other ving has a charge of at most $1/16$.
This implies that 
\[ t\le \sum_{G\in \pgset(\pts)}\left((2n/3-1)\cdot\frac{1}{8}+(n/3+1)\cdot\frac{1}{16} \right)  = \pg(\pts) \cdot \frac{5n-3}{48}. \]
Combining this with \eqref{eq:ChargeInitial2a} leads to $\hv_0(\pts) < \frac{5n}{48} = \frac{n}{ 9.6}$.

\parag{A more careful study of degrees in a triangulation.}
By inspecting the proof of Lemma \ref{le:deg3} in Section \ref{sec:Lemmas}, we observe that when a triangulation has $2n/3-1$ degree 3 vertices, every other internal vertex has degree at least 6. 
That is, a triangulation cannot have $2n/3-1$ vertices of degree 3 and $n/3+1$ vertices of degree 4.
The following lemma is based on this observation. 
See Section \ref{sec:Lemmas} for a proof. 

\begin{lemma} \label{le:deg34}
Every triangulation $T\in \pgset(\pts)$ satisfies that
\[ v_4(T)\le \frac{6n-9v_3(T)-6}{2}. \]
\end{lemma}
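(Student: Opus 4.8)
The plan is to prove Lemma~\ref{le:deg34} by refining the standard Euler-formula counting argument for triangulations that presumably underlies Lemma~\ref{le:deg3}. Let $T$ be a triangulation of an $n$-point set with a triangular convex hull. Since the convex hull is a triangle, $T$ has exactly $3$ hull vertices (each of degree at least $2$) and $n-3$ internal vertices (each of degree at least $3$, by the same visibility reasoning as in Lemma~\ref{le:visibility} applied to the triangulation). Euler's formula gives $|E(T)| = 3n - 6$, so $\sum_{p} \deg_T(p) = 2(3n-6) = 6n - 12$. The idea is to write this degree sum as $6n - 12 = \sum_p \deg_T(p)$ and split the vertices by degree, isolating the contributions of the degree-$3$ and degree-$4$ vertices and bounding everything else from below.

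First I would set up the bound $\sum_p (\deg_T(p) - 6) = -12$, i.e. $\sum_p (6 - \deg_T(p)) = 12$. Each degree-$3$ vertex contributes $+3$ to the left side, each degree-$4$ vertex contributes $+2$, each degree-$5$ vertex contributes $+1$, the three hull vertices contribute at most $3 \cdot 4 = 12$ (since they have degree $\ge 2$), and every internal vertex of degree $\ge 6$ contributes $\le 0$. The naive inequality $3 v_3(T) + 2 v_4(T) \le 12 + (\text{positive slack from degree-5 and low-degree hull vertices})$ is not quite tight enough on its own; the key extra input, as flagged in the paragraph preceding the lemma statement, is the structural fact from the proof of Lemma~\ref{le:deg3}: \emph{when $v_3(T)$ is large, the remaining internal vertices are forced to have degree at least $6$}, and more generally two degree-$3$ vertices cannot be adjacent in a way that would let a degree-$4$ or degree-$5$ vertex sit between them. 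So I would invoke that adjacency/neighborhood structure to rule out the degree-$5$ slack and the excess hull-vertex slack whenever we are in the regime where $v_4(T)$ could be close to the claimed bound.

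Concretely, the target inequality $v_4(T) \le \frac{6n - 9 v_3(T) - 6}{2} = 3n - 3 - \tfrac{9}{2} v_3(T)$ should fall out of a weighted count: assign each vertex the charge $6 - \deg_T(p)$, total $12$; redistribute so that each degree-$3$ vertex ends up "paying" for its degree-$4$ neighbors. Since each degree-$3$ internal vertex has $3$ neighbors, and (by the Lemma~\ref{le:deg3} structure) those neighbors cannot themselves be degree-$3$ and are "used up" in a controlled way, one gets a relation of the form $3 v_3(T) + c\, v_4(T) \le C n + O(1)$ with the constants matching $\frac{9}{2}$ and $3n-3$ after clearing. I would double-check the exact constants by testing against the extremal configuration (the one achieving $v_3(T) = 2n/3 - 1$, where the bound should force $v_4(T) \le 0$, i.e. $3n - 3 = \tfrac{9}{2}(2n/3 - 1)$, which checks out up to the constant term).

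\textbf{The main obstacle} I expect is making the "degree-$4$ vertices are adjacent to enough high-degree vertices" claim precise and quantitative. The preceding paragraph only asserts the coarse dichotomy ("$2n/3-1$ threes forces all other internal vertices to have degree $\ge 6$"); upgrading this to a linear inequality valid for \emph{all} values of $v_3(T)$ requires re-examining the discharging argument of Lemma~\ref{le:deg3} and tracking how much "excess charge" each degree-$3$ vertex forces into its neighborhood. In particular, I need to be careful about degree-$3$ vertices that are mutually adjacent (forming a small cluster near the hull) versus isolated ones, and about whether a single high-degree vertex can be the neighbor of several degree-$3$ and several degree-$4$ vertices simultaneously (double-counting). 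The cleanest route is probably to prove it by the same discharging rule as Lemma~\ref{le:deg3} but with a second round of discharging that sends charge from high-degree vertices to adjacent degree-$4$ vertices, then read off the inequality from the fact that the total charge is still $12$; I would organize the proof so that Lemma~\ref{le:deg3} and Lemma~\ref{le:deg34} share this discharging setup rather than reproving it from scratch.
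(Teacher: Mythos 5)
Your plan has a genuine gap, and it sits exactly at the step you yourself flag as the ``main obstacle.'' The inequality to be proven is $9v_3(T)+2v_4(T)\le 6n-6$, whose right-hand side grows linearly in $n$. A discharging scheme whose conserved quantity is the Euler excess $\sum_p(6-\deg_T(p))=12$ can only ever output inequalities with a constant on the right; no redistribution rule of that total charge, however clever, produces the $6n$ term. To get a right-hand side of order $n$ you must bring in a quantity that itself scales with $n$, and you never identify one. You assert that ``one gets a relation of the form $3v_3(T)+c\,v_4(T)\le Cn+O(1)$'' from the discharging, but the mechanism generating $Cn$ is precisely what is missing. (Your premise that Lemma \ref{le:deg3} is proved by discharging is also off: the paper proves it by deleting the internal degree-3 vertices and counting triangles of the resulting triangulation, not by an Euler-formula charge argument; so there is no existing discharging setup to ``re-examine'' or extend.)

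The quantity the paper uses is the number of triangles of the reduced triangulation $T'$ obtained from $T$ by deleting all internal degree-3 vertices: $T'$ has $2|\pts_T|-5$ bounded triangles, with $|\pts_T|\le n-v_3(T)+1$. Each deleted vertex occupies its own triangle of $T'$ (two internal degree-3 vertices of a triangulation cannot be adjacent), so at least $v_3(T)-1$ triangles are ``non-empty,'' and the key local observation is about degree-4 vertices of $T$: such a vertex has degree 3 or 4 in $T'$, and in either case at least two of the triangles of $T'$ incident to it are empty. Since an empty triangle has only three vertices, double counting gives $2v_4(T)\le 3t_{\text{empty}}\le 3\bigl((2|\pts_T|-5)-(v_3(T)-1)\bigr)$, which yields the stated bound. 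Note that the relevant local structure is incidence of degree-4 vertices to empty triangles of $T'$, not adjacency between degree-3 and degree-4 vertices, so your proposed rule ``each degree-3 vertex pays for its degree-4 neighbors'' is aimed at the wrong combinatorial object and would not, even if made precise, recover the coefficient $9/2$.
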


Let $v_\text{large}(T) = n- v_3(T) - v_4(T)$.
Then the total charge in $T$ is at most
\begin{equation} \label{eq:deg345}
\frac{v_3(T)}{8} + \frac{v_4(T)}{16} + \frac{v_\text{large}(T)}{32}. 
\end{equation}
We look for the largest value \eqref{eq:deg345} may have under the constraints of Lemmas \ref{le:deg3} and \ref{le:deg34}.
We also include the additional constraint $v_3(T)+v_4(T)+v_\text{large}(T)=n$.

As a starting point, when $v_3(T)=2n/3-1$, Lemma \ref{le:deg34} implies that $v_4(T)\le 1$.
In this case, the value of \eqref{eq:deg345} is 
\[ \frac{2n/3-1}{8}+\frac{1}{16}+\frac{n/3}{32} = \frac{3n-2}{32}. \]

When decreasing $v_3(T)$ by 1, we may increase $v_4(T)$ by at least 4, which increases the total charge.
Indeed, 
\begin{itemize}[noitemsep,topsep=1pt]
\item The removed 3-ving decreases the charge by $1/8$.
\item At least four new 4-vings increase the charge by at least $4/16 = 1/4$. 
\item We lose three larger vings, which decreases the charge by $3/32$.
\end{itemize}

To increase \eqref{eq:deg345}, we repeatedly decrease the number of 3-vings and increase the number of 4-vings.
We can keep doing that until $v_3(T)=(4n-6)/7$.
Then, the bound of Lemma \ref{le:deg34} implies that $v_3(T)+v_4(T)=n$, so $v_\text{large}=0$.
Thus, a further decrease of $v_3(T)$ by 1 would only increase $v_4(T)$ by 1, which decreases the total charge. 
We conclude that the maximum value of  \eqref{eq:deg345} is 
\[ \frac{(4n-6)/7}{8}+\frac{(3n+6)/7}{16}+\frac{0}{32} = \frac{11n-6}{112}. \]

As a sanity check, we also ran this optimization problem in Wolfram Mathematica \cite{Wolfram}, obtaining the same result. 
We conclude that
\[ t\le \sum_{G\in \pgset(\pts)}\left(\frac{11n-6}{112} \right)  = \pg(\pts) \cdot \frac{11n-6}{112}. \]
Combining this with \eqref{eq:ChargeInitial2a} leads to $\hv_0(\pts) < \frac{11n}{112}$.
\end{proof}

It seems plausible that a more careful analysis would lead to a better bound on the charge that a triangulation can have, thus improving the bound of Theorem \ref{th:v0}. 
Further progress may be obtained by considering the charge in an \emph{average triangulation}. 
We leave these approaches as open problems for future work.

\section{Proofs of Lemmas} \label{sec:Lemmas}

In this section, we prove the lemmas that were stated in Section \ref{sec:theorems}.
All lemmas study properties of plane graphs.
\vspace{2mm}

\begin{figure}[ht]
\centerline{\includegraphics[width=0.4\textwidth]{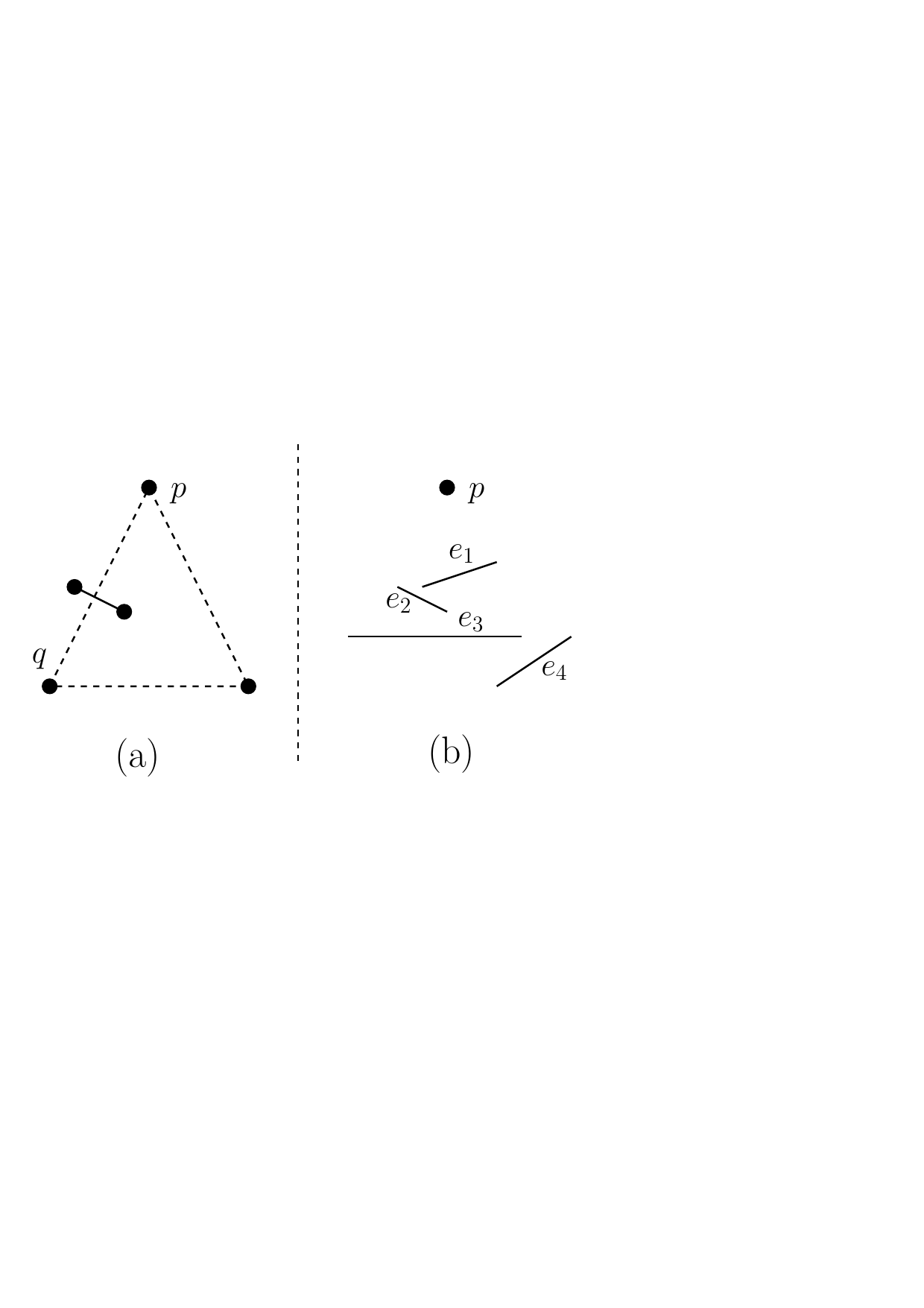}}
\caption{(a) If $q$ is not visible from $p$, then there exists a vertex outside of the triangle. (b) The only valid order from $p$ is $e_1 < e_2 < e_3 < e_4$. Since $e_1$ is smallest, both its endpoints are visible from $p$. }
\label{fi:visibility3}
\end{figure}

\noindent {\bf Lemma \ref{le:visibility}.}
\emph{Every 0-ving of every graph $G\in \pgset(\pts)$ has visibility at least 3. 
}

\begin{proof}
We fix a graph $G\in \pgset(\pts)$ and a ving $v=(p,G)$.
We first consider the case where $p$ is a hull point.
Since the convex hull of $\pts$ is a triangle, the other two vertices of this triangle are visible from $p$ in $G$. 
Indeed, if an edge $e$ blocks the visibility of a hull vertex from $p$, then one vertex of $e$ is outside of the triangle. 
See Figure \ref{fi:visibility3}(a).
Since this is impossible, the visibility of $(p,G)$ is at least 2.

By the assumption that $n\ge 5$, the triangular convex hull is not empty. 
We may assume that there are edges inside of the convex hull, since otherwise we are done. 
We define a partial order on these edges, as follows.
We say that an edge $e$ is smaller than an edge $e'$ if we can shoot a ray from $p$ that first hits $e$ and then hits $e'$.
See Figure \ref{fi:visibility3}(b).
Since the edges do not intersect, this partial order is well-defined.

We say that an edge $e$ is \emph{smallest}
if no other edge is smaller than $e$ with respect to the partial order above. 
Since this order is partial, there may be more than one smallest edge.
We arbitrarily consider a smallest edge $e$.
By definition, both endpoints of $e$ are visible from $p$.
See Figure \ref{fi:visibility3}(b).
Since at most one of these endpoints is a hull vertex, the visibility of $p$ is at least 3.

\begin{figure}[ht]
\centerline{\includegraphics[width=0.4\textwidth]{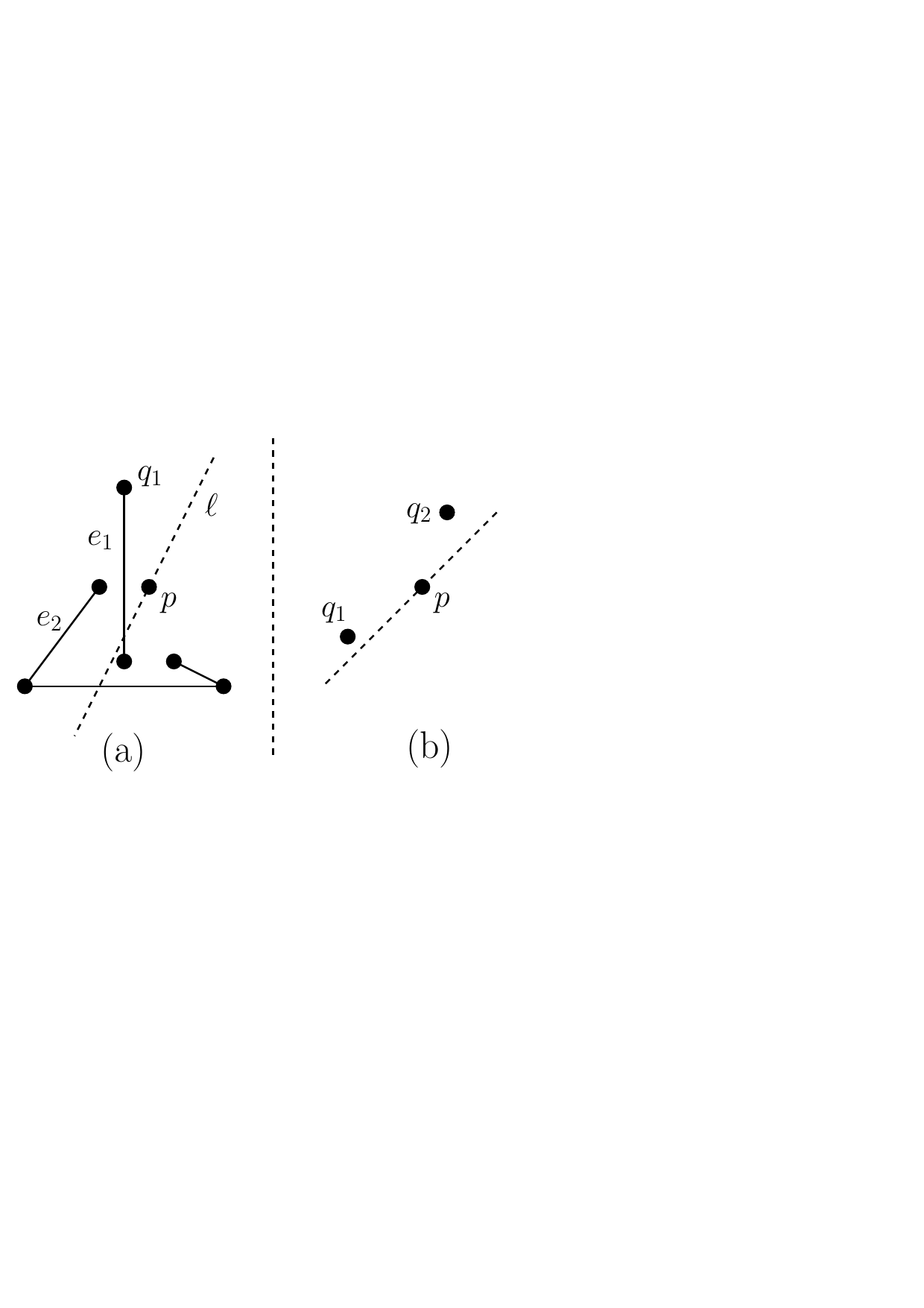}}
\caption{(a) In the half-plane above $\ell$, the edge $e_1$ is a smallest edge. At least one endpoint of such an edge is in the half-plane and visible from $p$. (b) There always exists a line that contains $p$ and has $q_1,q_2$ on the same side. }
\label{fi:halfPlane}
\end{figure}

\parag{The case where $p$ is an internal vertex.}
We next consider the case where $p$ is an internal vertex.
Let $\ell$ be a line that contains $p$ but no other point of $\pts$.
We fix one open half-plane that is defined by $\ell$ and consider all edges of $G$ that are contained in this half-plane or intersect it. 
See Figure \ref{fi:halfPlane}(a).
We define a partial order on the edges, as above.
Let $e$ be a smallest edge with respect to this order.
At least one endpoint $q_1$ of $e$ is in the half-plane.
By definition, $q_1$ is visible from $p$.

We repeat the above with the other open half-plane defined by $\ell$, to obtain a point $q_2$ visible from $p$.
Since no three points of $\pts$ are collinear, there exists a line $\ell'$ such that $\ell' \cap \pts = \{p\}$ and $q_1,q_2$ are on the same side of $\ell'$.
See Figure \ref{fi:halfPlane}(b).
By repeating the above argument with $\ell'$ rather than $\ell$, we get a point $q_3$ that is visible from $p$.
Thus, the visibility of $p$ is at least 3.
\end{proof}

\noindent {\bf Lemma \ref{le:deg3}.}
\emph{Every triangulation $T\in \pgset(\pts)$ satisfies $v_3(T)\le 2n/3-1$.}
\vspace{-2mm}

\begin{proof}
We fix a triangulation $T\in \pgset(\pts)$.
By the assumption $n\ge 5$, the convex hull of $\pts$ is not empty. 
We claim that at most one of the three vertices of the convex hull is of degree 3. 
Indeed, consider a 3-ving $(p,T)$ that is a hull vertex. 
Let $q,r$ be the other hull points and let $a$ be the internal point that is connected to $p$ in $T$.
See Figure \ref{fi:Deg3}.
Since $(p,T)$ is a 3-ving, the edges $pq$ and $pr$ exist in $T$ and the triangles $\triangle pqa, \triangle pra$ are empty.

\begin{figure}[ht]
\centerline{\includegraphics[width=0.6\textwidth]{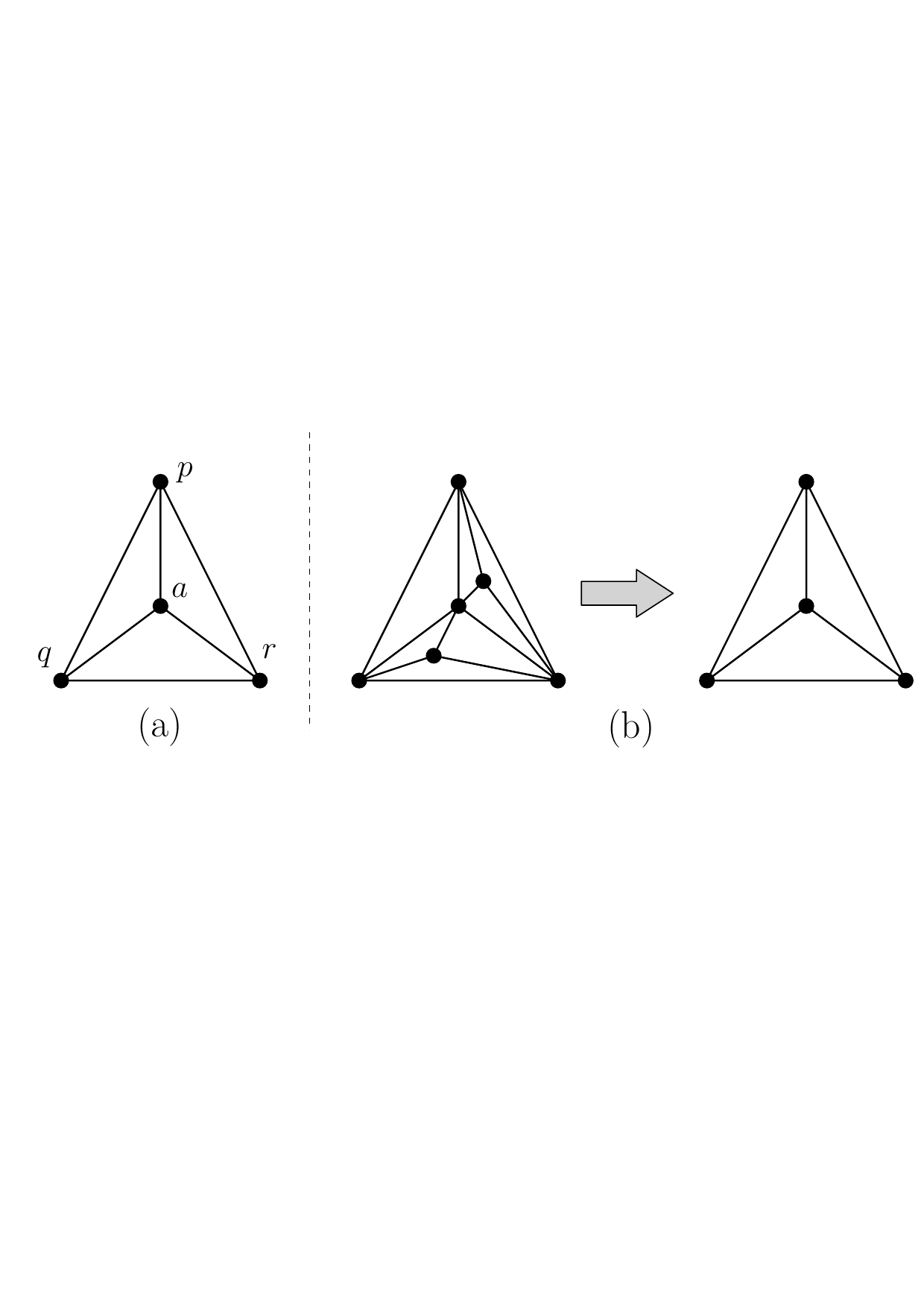}}
\caption{(a) The setup at the beginning of the proof of Lemma \ref{le:deg3}. (b) Removing internal 3-vings leads to another triangulation. }
\label{fi:Deg3}
\end{figure}

Since $n\ge 5$, there is at least one more vertex in $\triangle qra$. 
This implies that the degrees of $(q,T)$ and $(r,T)$ are at least 4. 
That is, at most one hull vertex has degree 3. 

\parag{Studying internal vertices.}
Let $\pts_T$ be the set of points of $\pts$ that have degree larger than 3 in $T$ or are part of the convex hull (or both).
By removing from $T$ every internal vertex of degree 3, we obtain a triangulation $T'$ of $\pts_T$.
See Figure \ref{fi:Deg3}(b).

A triangulation of $m$ points with a triangular convex hull has $2m-4$ faces (for example, see \cite[Chapter 12]{Bona02}).
Thus, a triangulation of $\pts_T$ has $2|\pts_T|-5$ triangles (not counting the unbounded face).
In a triangulation, two internal degree 3 vertices cannot be connected by an edge, so each triangle contains at most one of the removed vertices. 

Since at most one hull vertex has degree 3, we get that $|\pts_T|\le n - v_3(T) +1$.
Combining the above leads to
\[ v_3(T) \le 2|\pts_T|-5 \le 2(n-v_3(T)+1)-5 = 2n-2v_3(T) -3.  \]
Rearranging the above leads to $v_3\le 2n/3-1$.
\end{proof}

\noindent {\bf Lemma \ref{le:deg34}.}
\emph{Every triangulation $T\in \pgset(\pts)$ satisfies that}
\[ v_4(T)\le \frac{6n-9v_3(T)-6}{2}. \]
\vspace{-2mm}

\begin{proof}
We begin in the same way as in the proof of Lemma \ref{le:deg3}.
We fix a triangulation $T\in \pgset(\pts)$, define the set $\pts_T$ as before, and let $T'$ be the triangulation obtained by removing from $T$ the vertices of $\pts\setminus\pts_T$.
That is, $T'$ is the triangulation of $\pts_T$ that is contained in $T$.

\begin{figure}[ht]
\centerline{\includegraphics[width=0.4\textwidth]{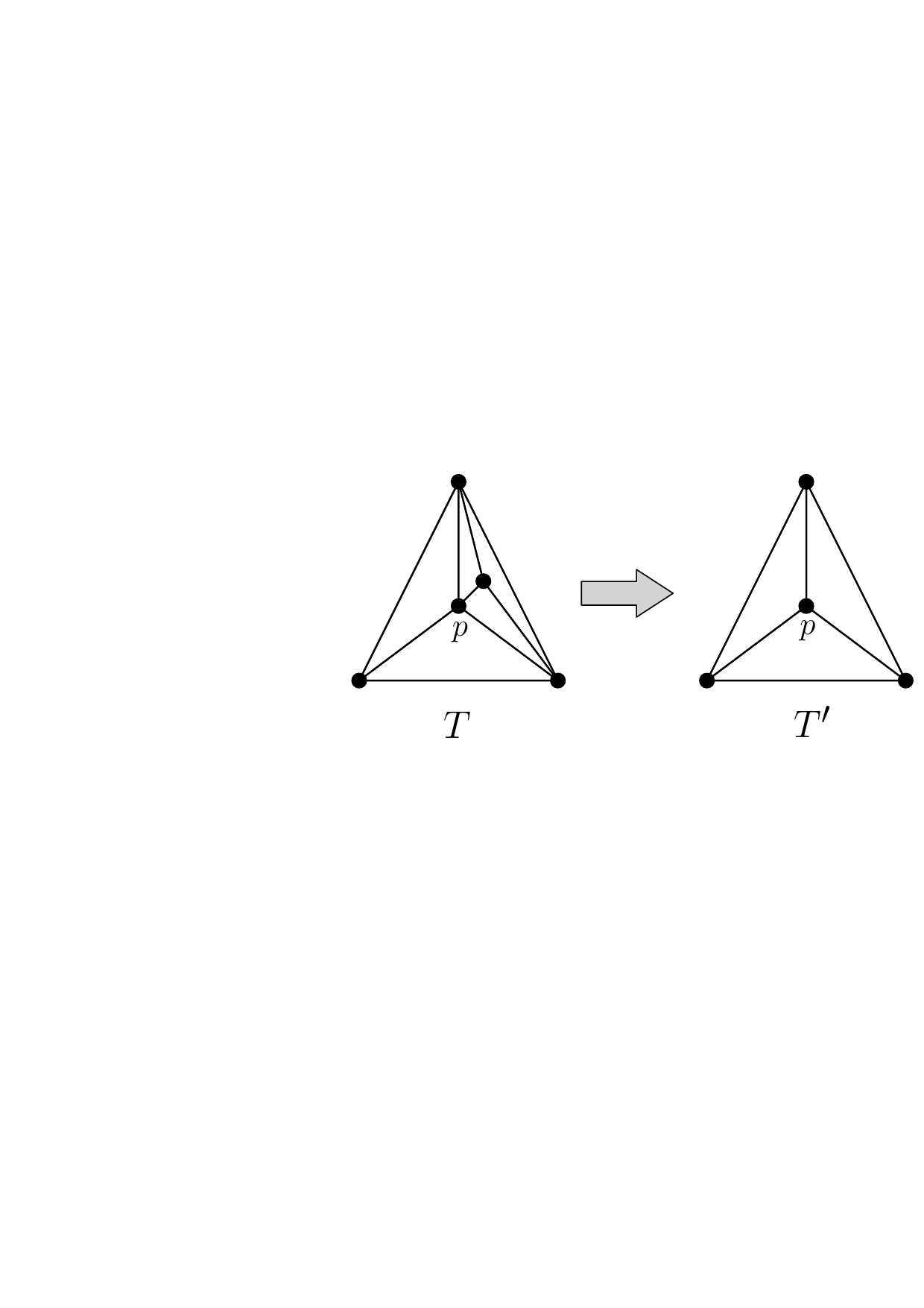}}
\caption{If $(p,T')$ is a 3-ving and $(p,T)$ is a 4-ving, then exactly one of the triangles surrounding $(p,T')$ is not empty.}
\label{fi:empty}
\end{figure}

We say that a triangle of $T'$ is \emph{empty} if it does not contain a vertex of $\pts\setminus \pts'$ that has degree 3 in $T$.
Consider a ving $(p,T)$ of degree 4 with internal $p$.
Then $(p,T')$ is of degree 3 or 4.
If $(p,T')$ is of degree 3, then exactly one triangle adjacent to it is not empty.  
If $(p,T')$ is of degree 4, then all four triangles next to it are empty. 
See Figure \ref{fi:empty}.

We recall that $T'$ contains $2|\pts_T|-5$ triangles.
Each empty triangle is adjacent to three points of $\pts_T$ and each degree 4 ving in $T'$ is adjacent to at least two empty triangles. 
Let $t_\text{empty}$ be the number of empty triangles in $T'$.
Since at most one hull vertex has degree 3, we conclude that
\begin{align*} v_4(T) &\le \frac{3\cdot t_\text{empty}}{2} \le \frac{3\cdot((2|\pts_T|-5)-(v_3(T)-1))}{2} \\[2mm]
&\le \frac{3\cdot(2(n-(v_3(T)-1))-5-v_3(T)+1)}{2} =  \frac{6n-9v_3(T)-6}{2}. 
\end{align*}
\end{proof}

\section{Additional proofs} \label{sec:Additional}

This section consists of two remaining proofs: A construction with $\hv_0(\pts)>\frac{n}{23.32}$ and Lemma \ref{le:NumberOfPG}.

\begin{figure}[ht]
\centerline{\includegraphics[width=0.12\textwidth]{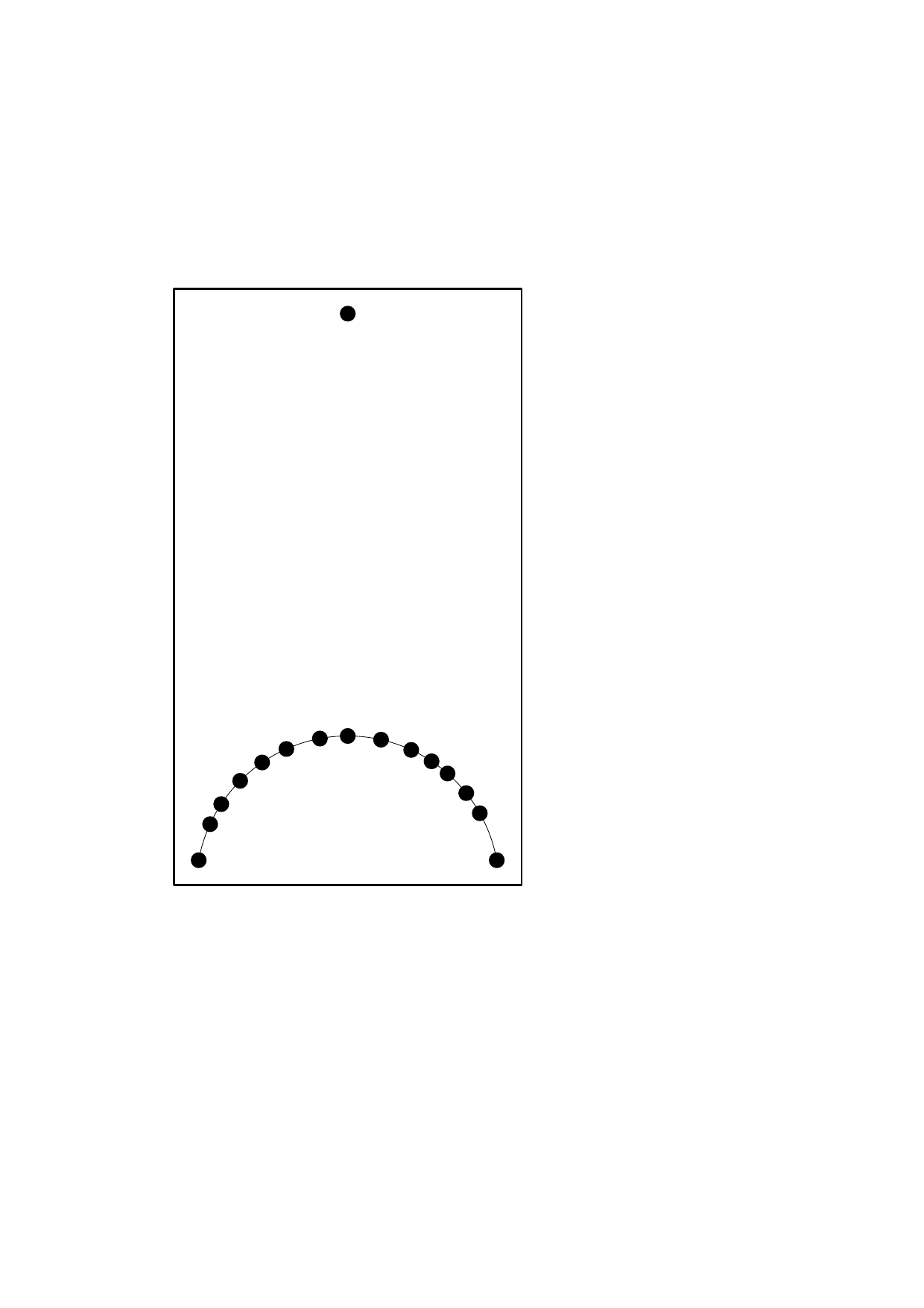}}
\caption{The construction of Lemma \ref{le:construction}: $n-1$ points on an upper half circle and one point high above.}
\label{fi:construction}
\end{figure}

\begin{lemma} \label{le:construction}
There exists a set $\pts$ of $n$ points with a triangular convex hull, approximately $23.31^n$ plane graphs, and $\hv_0(\pts)>\frac{n}{23.32}$.
\end{lemma}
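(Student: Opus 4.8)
The plan is to analyze the point set depicted in Figure \ref{fi:construction}: take $m = n-1$ points $q_1, \dots, q_m$ placed in convex position along a shallow arc (say, on an upper half-circle, ordered left to right), and one additional point $r$ placed far above them, so that the convex hull of $\pts$ is the triangle $q_1 q_m r$ and every $q_i$ sees every other $q_j$ without obstruction from $r$. The first step is to count $\pg(\pts)$. Since $r$ is very high, an edge $rq_i$ is crossed only by edges $q_jq_k$ with $j < i < k$; conversely the edges among the $q_i$'s form an arbitrary plane graph on points in convex position. So a plane graph of $\pts$ is specified by choosing a plane graph $G'$ on the convex polygon $q_1 \dots q_m$ together with a subset $S \subseteq \{1,\dots,m\}$ of the $q_i$ joined to $r$, subject to the compatibility condition that no chosen edge $rq_i$ is crossed by an edge of $G'$. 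I would set up a transfer-matrix / recursive count (as is standard for plane graphs on convex point sets, cf. \cite{AHHHKV07}) that tracks, as one sweeps the edges of $G'$ "closest to $r$", which indices remain eligible to connect to $r$; this yields $\pg(\pts) = \Theta(c^n)$ for an explicit constant $c$, and the claim is $c \approx 23.31$.

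The second step is to compute $\hv_0(\pts)$, or rather a good lower bound for it. By linearity of expectation, $\hv_0(\pts) = \sum_{p \in \pts} \Pr[\deg_G(p) = 0]$, where $G$ is uniform over $\pgset(\pts)$. The dominant contribution comes from the interior points $q_2, \dots, q_{m-1}$ (the three hull vertices are isolated in only a $1/\Theta(c)$-fraction of graphs each, a lower-order effect). For a fixed interior $q_i$, the event $\{\deg_G(q_i)=0\}$ means $q_i$ is joined to neither $r$ nor any $q_j$; I would show by a bijection/ratio argument that $\Pr[\deg_G(q_i)=0]$ is bounded below by roughly $1/c^2$ times a correction, or more precisely that the number of plane graphs isolating $q_i$ is a $\Theta(1/c)$-fraction of all plane graphs — because "forgetting" $q_i$ leaves a plane graph on the remaining $n-1$ points, and a constant fraction of those extend back to a $q_i$-isolating graph. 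Summing over the $\approx n$ interior points gives $\hv_0(\pts) \ge \alpha n$ for an explicit $\alpha$, and the arithmetic should deliver $\alpha > 1/23.32$, matching the $O(23.32^n)$ bound claimed for $\pg^\triangle(n)$ via Lemma \ref{le:NumberOfPG} (the two constants agree up to the slack between $23.31$ and $23.32$).

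The third step is bookkeeping: verify that the convex hull really is a triangle (so Lemma \ref{le:NumberOfPG} applies), that $n \ge 5$, and that the configuration is in general position (no three collinear), which is immediate by perturbation. I expect the main obstacle to be the first step — getting the growth constant $c$ exactly, and in particular proving the matching lower bound $\pg(\pts) = \Omega(23.31^n)$ rather than just an $O(\cdot)$ upper bound. The transfer matrix for plane graphs on a convex polygon with the extra apex $r$ has a state space that must encode the "staircase" of eligibility for $r$-edges, and extracting its spectral radius (or the exponential growth rate of the associated generating function) to the needed precision is the delicate computation; the isolated-vertex count in step two is then a comparatively routine ratio estimate once the enumeration framework is in place. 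As with the optimization in Theorem \ref{th:v0}, I would corroborate the numerical constants with a computer algebra check.
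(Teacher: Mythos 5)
There is a genuine gap, and it begins with the geometry of the construction. In the configuration of Figure \ref{fi:construction} (points $p_1,\ldots,p_{n-1}$ on an upper half-circle, apex far above), a point $p_i$ with $j<i<k$ lies \emph{above} the chord $p_jp_k$, so the nearly vertical edge from $p_i$ up to the apex crosses no chord at all; your claim that an apex edge ``is crossed only by edges $q_jq_k$ with $j<i<k$'' has the convexity backwards. Consequently there is no compatibility condition to track: the apex edges are independent of the edges among the arc points, and $\pg(\pts)$ factors exactly as $2^{n-1}$ times the number of plane graphs on $n-1$ points in convex position. The paper then simply quotes the Flajolet--Noy asymptotics \eqref{eq:GraphsForConv} from \cite{FN99}, giving $\pg(\pts)=\Theta\bigl(2^{n-1}(6+4\sqrt{2})^{n-1}\bigr)\approx\Theta(23.31^n)$, since $2(6+4\sqrt{2})=12+8\sqrt{2}\approx 23.31$. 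Your proposed transfer-matrix computation is therefore not only unnecessary but, as you set it up (with the spurious crossing constraint), it would count a different and smaller quantity; and in any case you explicitly defer the extraction of $c\approx 23.31$ as ``the main obstacle,'' so the heart of the lemma --- the value of the growth constant --- is asserted rather than proved.

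The second step also needs tightening, although the right idea is present. The graphs in which $p_i$ is isolated are in bijection with \emph{all} plane graphs of $\pts\setminus\{p_i\}$ (delete the isolated vertex, or re-insert it with no edges); it is not that ``a constant fraction of those extend back.'' Since $\pts\setminus\{p_i\}$ is the same type of configuration with $n-1$ points, the isolation probability is the exact ratio $\pg(\pts\setminus\{p_i\})/\pg(\pts)$, which by two applications of \eqref{eq:GraphsForConv} exceeds $\tfrac{1}{23.314}$ for large $n$; it is of order $1/c$, not $1/c^{2}$. Summing over the $n-1$ arc points, and noting that the apex is isolated with probability exactly $2^{-(n-1)}$, gives $\hv_0(\pts)>\frac{n-1}{23.314}+\frac{1}{2^{n-1}}>\frac{n}{23.32}$ for $n$ large. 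So your overall outline (count the graphs, then bound isolation probabilities via a removal bijection) has the right shape, but as written the enumeration step rests on an incorrect crossing analysis and is left uncompleted, so the stated constants are not established.
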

\begin{proof}
Let $\pts$ be a set of $n$ points, as depicted in Figure \ref{fi:construction}.
In more detail, $n-1$ points $p_1,\ldots,p_{n-1}$ are on the top half of a circle.
The last point $q$ is above this half circle, sufficiently far away so that no edge of the form $qp_i$ intersects an edge of the form $p_jp_{j+1}$.

By \cite[Theorem 4]{FN99}, the number of plane graphs that can be drawn over $m$ points in convex position is
\begin{equation} \label{eq:GraphsForConv} 
\frac{1}{4}\sqrt{-120+99\sqrt{2}} \cdot \frac{(6+4\sqrt{2})^m}{\sqrt{\pi}m^{3/2}}\cdot \left(1+O\left(\frac{1}{m}\right)\right). 
\end{equation}

In our case, we can set $m=n-1$, to obtain the number of plane graphs drawn on $p_1,\ldots,p_{n-1}$.
This ignores the $n-1$ potential edges of the form $qp_i$.
Since these $n-1$ edges do not form any intersections, there are $2^{n-1}$ choices for which edges are in the graph.
We conclude that 
\[ \pg(\pts) = \Theta(2^{n-1} \cdot (6+4\sqrt{2})^{n-1})\approx \Theta(23.31^n).\]

We fix $p_i$ for some $1 \le i \le n-1$.
There is a natural bijection between 0-vings of $p_i$ and plane graphs of $\pts\setminus\{p_i\}$.
This bijection removes a 0-ving of $p_i$ from a plane graph of $\pts$. 
In the other direction, it adds $p_i$ to a plane graph of $\pts\setminus\{p_i\}$, as a 0-ving.
Note that $\pts\setminus\{p_i\}$ is the same point configuration with $n-1$ points rather than $n$.

We set $c=\frac{1}{4\sqrt{\pi}}\sqrt{-120+99\sqrt{2}}$. 
By applying \eqref{eq:GraphsForConv} twice, the portion of graphs of $\pgset(\pts)$ where $p_i$ is a 0-ving is
\[ \frac{\pg(\pts\setminus\{p_i\})}{\pg(\pts)} = \frac{2^{n-2}c \cdot \frac{(6+4\sqrt{2})^{n-2}}{{(n-2)}^{3/2}}\cdot \left(1+O\left(\frac{1}{n-2}\right)\right)}{2^{n-1}c \cdot \frac{(6+4\sqrt{2})^{n-1}}{{(n-1)}^{3/2}}\cdot \left(1+O\left(\frac{1}{n-1}\right)\right)} = \frac{ \frac{1}{{(n-2)}^{3/2}}\cdot \left(1+O\left(\frac{1}{n-2}\right)\right)}{2 \cdot \frac{6+4\sqrt{2}}{{(n-1)}^{3/2}}\cdot \left(1+O\left(\frac{1}{n-1}\right)\right)}. \]
When $n$ is sufficiently large, this expression is larger than $\frac{1}{23.314}$.

It remains to consider the top vertex $q$. 
This vertex forms a 0-ving when none of the edges $qp_i$ exist. 
By repeating the above calculation, we obtain that the portion of graphs of $\pgset(\pts)$ where $q$ is a 0-ving is $\frac{1}{2^{n-1}}$.
Combining the above leads to
\[ \hv_0(\pts) > \frac{n-1}{23.314} + \frac{1}{2^{n-1}}. \]
When $n$ is sufficiently large, we obtain the assertion of the lemma.
\end{proof}

For a plane graph $G$ of a point set with a triangular convex hull, let $v_0^\triangle(G)$ be the number of internal vertices of degree 0 in $G$.
\vspace{2mm}

\noindent {\bf Lemma \ref{le:NumberOfPG}.}
\emph{Consider a fixed $n \geq 2$. 
Let $\delta_{n}>0$ satisfy
$\hv_0^\triangle(\pts) \le \delta_n n$ for every set $\pts$ of $n$
points in $\RR^2$. Then}
\[\pg^\triangle(n) \ge \frac{1}{\delta_n} \pg^\triangle(n-1). \]

\begin{proof}
Let $\pts$ be a set that minimizes $\pg(\pts)$
among all sets of $n$ points.
We can get some plane graphs of $\pts$ by
choosing an internal point $q \in \pts$ and
a plane graph $H$ of $\pts \setminus \{q\}$, and inserting $q$ to $H$ with no edges.
A plane graph $G$ of $\pts$ can be obtained
in exactly $v_0^\triangle(G)$ ways in this manner.
This implies that
\[
\hv_0(\pts)\cdot \pg(\pts) = \sum_{G\in \pgset(\pts) } v_0^\triangle(G) = \sum_{q \in \pts} \pg(\pts \setminus\{q\}).
\]

The left part of the above equation equals $\hv_0^\triangle(\pts)\cdot \pg^\triangle(n)$.
The right part is at least $n\cdot \pg^\triangle(n-1)$. 
Recalling that $\hv_0^\triangle(\pts) \le \delta_n n$ leads to
\[ \pg^\triangle(n) = \pg(\pts) \ge \frac{n}{\hv_0^\triangle(\pts)} \cdot \pg(n-1) \ge
\frac{1}{\delta_n} \cdot \pg(n-1). \]
\end{proof}

\end{document}